\definecolor{green}{rgb}{0.2, 0.8, 0.2}
\newtheorem{Theorem}{Theorem}
\newtheorem{Proposition}{Proposition}
\newtheorem{Lemma}{Lemma}
\newtheorem{Remarks}{Remarks}
\newtheorem{Example}{Example}
\def\komma{\raise.5ex\hbox{,}}
\def\punt{\raise.5ex\hbox{.}}
\newcommand{\R}{\mathbb R}
\newcommand{\Z}{\mathbb Z}
\newcommand{\Q}{\mathbb Q}
\newcommand{\N}{\mathbb N}
\newcommand{\C}{\mathbb C}
\newcommand{\T}{T_{\alpha}}
\begin{document}

\title{Matching of orbits of certain $N$-expansions with a finite set of digits}

\author{Yufei Chen}
\address{School of Mathematical Sciences, East China Normal University,
Shanghai, 201100, P.R.\ China \& \newline
Delft University of Technology, EWI (DIAM), Mekelweg 4, 2628 CD Delft, the
Netherlands} \email{Y.Chen-18@tudelft.nl}

\author{Cor Kraaikamp}
\address{Delft University of Technology, EWI (DIAM), Mekelweg 4, 2628 CD Delft, the
Netherlands} \email{c.kraaikamp@tudelft.nl}

\date{\today}

\begin{abstract}
In this paper we consider a class of continued fraction expansions: the so-called \emph{$N$-expansions with a finite digit set}, where $N\geq 2$ is an integer. These \emph{$N$-expansions with a finite digit set} were introduced in~\cite{KL,L}, and further studied in~\cite{dJKN,S}. For $N$ fixed they are steered by a parameter $\alpha\in (0,\sqrt{N}-1]$. In~\cite{KL}, for $N=2$ an explicit interval $[A,B]$ was determined, such that for all $\alpha\in [A,B]$ the entropy $h(T_{\alpha})$ of the underlying Gauss-map $T_{\alpha}$ is equal. In this paper we show that for all $N\in\N$, $N\geq 2$, such plateaux exist. In order to show that the entropy is constant on such plateaux, we obtain the underlying planar natural extension of the maps $T_{\alpha}$, the $T_{\alpha}$-invariant measure, ergodicity, and we show that for any two $\alpha,\alpha'$ from the same plateau, the natural extensions are metrically isomorphic, and the isomorphism is given explicitly. The plateaux are found by a property called \emph{matching}.
\end{abstract}

\maketitle


\section{Introduction}\label{sec:Introduction}
It is well known that every real number $x$ can be written as a finite (in case $x\in\Q$) or infinite (regular) continued fraction of the form:
\begin{equation}\label{eq:RCF}
x=a_0+\frac{\displaystyle 1}{\displaystyle a_1+\frac{1}{a_2+\ddots
+\displaystyle \frac{1}{a_n+\ddots}}} = [a_0;a_1,a_2,\dots ,a_n,\dots ],
\end{equation}
where $a_0\in\Z$ such that $x-a_0\in [0,1)$, and $a_n\in\N$ for $n\geq 1$. Such a \emph{regular continued fraction expansion} (RCF) of $x$ is unique if and only if $x$ is irrational; in case $x\in\Q$ one has two expansions of the form~(\ref{eq:RCF}).\smallskip\

Apart from the regular continued fraction expansion algorithm there exist a bewildering number of other continued fraction expansion algorithms. In this paper we consider a recent algorithm, which was introduced by  Edward Burger and some of his students in 2008 in~\cite{Bu}.

Let $N\in\N_{\geq 2}$ be a fixed positive integer, and define the map $T_N:[0,1)\to [0,1)$ by:
\begin{equation}\label{def:NGaussMap}
T_N(x)=\frac{N}{x}- \left\lfloor \frac{N}{x} \right\rfloor ,\,\, x\neq 0;\quad T_N(0)=0.
\end{equation}
Setting $d_1=d_1(x)=\lfloor N/x\rfloor$, and $d_n=d_n(x)=d_1\left( T_N^{n-1}(x)\right)$, whenever $T_N^{n-1}(x)\neq
0$, we find:
\begin{equation}\label{eq:intr3easier}
x=\frac{\displaystyle N}{\displaystyle d_1+\frac{N}{d_2+\ddots
+\displaystyle \frac{N}{d_n+T_N^n(x)}}}.
\end{equation}
Taking finite truncations yield the convergents, which converge to $x$.

Burger \emph{et al}.\ studied these $N$-expansions, as they could show that for every quadratic irrational number $x$ there exists infinitely many $N\in\N$ for which the $N$-expansion of $x$ is ultimately periodic with period length 1. In 2011, Anselm and Weintraub further studied $N$-expansions in~\cite{AW}. They showed that every positive real number $x$ always has an $N$-expansion, and for $N\geq 2$ even infinitely many, and that rationals always have finite and infinite expansions. Furthermore, in case $N\geq 2$ every quadratic irrational has both periodic and non-periodic expansions. In their algorithm to find an $N$-expansion of a real number $x$ there is a \emph{best choice} for the partial quotient (i.e.\ digit), and if one always makes this best choice for the partial quotients one finds what they call the \emph{best expansion} of $x$. One can show that the $N$-expansions obtained via the Gauss-map $T_N$ from~(\ref{def:NGaussMap}) are always best expansions. Note that in~\cite{AW} $N$-expansions are not introduced or studied via maps such as defined in~(\ref{def:NGaussMap}). This was done in~\cite{DKW}, where many properties of $N$-expansions (such as ergodicity, the form of the invariant measure, entropy) were obtained in a very easy way.\smallskip\

In his MSc-thesis~\cite{L} from 2015, and in a subsequent paper with the second author~\cite{KL}, Niels Langeveld considered $N$-expansions on an interval \textbf{not} containing $0$. To be more precise: let $N \in \N_{\geq 2}$ and $\alpha \in \R$ such that $0 < \alpha \leq \sqrt{N}-1$, then we define $I_\alpha:=[\alpha,\alpha+1]$ and $I_\alpha^-:=[\alpha,\alpha+1)$ and investigate the continued fraction map $T_{\alpha}:I_{\alpha} \to I_{\alpha}^-$, defined as:
\begin{equation}\label{defNalphaGaussMap}
T_{\alpha}(x):= \frac{N}{x} - d(x),
\end{equation}
where $d: I_{\alpha} \to \N$ is defined by $d(x):=\left \lfloor \frac{N}x -\alpha\right \rfloor$.\smallskip\

Note that due to the fact that $\alpha >0$ there are only finitely many values of partial quotients $d$ possible. Furthermore, \emph{all} expansions are infinite. This new $N$-expansion (with a finite digit set) could be viewed as a small variation of the $N$-expansions with infinitely many digits, but actually the situation is suddenly dramatically different and more difficult as for certain values of $N$ and $\alpha$ ``\emph{gaps}'' in the interval $I_{\alpha}$ appear. We mention here an example from~\cite{dJKN}: take $N=51$, $\alpha = 6$. In this case there are only 2 digits (viz.\ 1 and 2), and setting for $n\geq 0$: $r_n=T_{\alpha}^n(\alpha +1)$, $\ell_n=T_{\alpha}(\alpha )$, and in general for a digit $i$:
$$
f_i=f_i(N) = \frac{\sqrt{4N+i^2}-i}{2},
$$
as the fixed point of $T_{\alpha}$ with digit $i$, then we immediately see two \emph{gaps} popping up:
$$
\begin{tikzpicture}[scale =10]
\draw[black,fill=black] (.2111,0) circle (.02ex);
\draw[black,fill=black] (.659,0) circle (.02ex);
  \draw [thick] (0,0) -- (0.159,0);
  \draw [dashed,red] (0.159,0) -- (0.2857,0);
  \draw [thick] (0.2857,0) -- (0.5,0);
   \draw [dashed,red] (0.5,0) -- (0.846,0);
\draw [thick] (0.846,0) -- (1,0);
  \draw (.375,-.015) -- (.375,.02);
 \draw (1,-.015) -- (1,.02);
  \draw (0,-.015) -- (0,.02);
   \draw (.159,-.005) -- (.159,.005);
    \draw (.2857,-.005) -- (.2857,.005);
 \draw (.846,-.005) -- (.846,.005);
 \draw (.5,-.005) -- (.5,.005);
\node at (.69,.03) {$\Delta_1$};
\node at (.19,.03) {$\Delta_2$};
\node at (0,-.03) {$\alpha$};
\node at (1,-.03) {$\alpha+1$};
\node at (.375,-.03) {$p_2$};
\node at (.659,-.03) {$f_1$};
\node at (.2111,-.03) {$f_2$};
\node at (.159,-.03) {$r_2$};
\node at (.286,-.03) {$r_1$};
\node at (.5,-.03) {$\ell_1$};
\node at (.846,-.03) {$\ell_2$};
\end{tikzpicture}
$$
In~\cite{dJKN} it has been investigated when these ``gaps'' appear. In~\cite{dJK} it will be shown that the number of gaps grows when $N\in\N$ increases. In spite of the gaps, in~\cite{dJKN} the following results were obtained.\smallskip\

Since $\inf |T_{\alpha}'| > 1$, applying Theorem 1 from the classical 1973 paper by Lasota and Yorke (cf.~\cite{LaY}, see also~\cite{LiY}) immediately yields the following assertion:
\begin{Proposition}\label{before ergodic}
If $\mu$ is an absolutely continuous invariant probability measure for $T_{\alpha}$, then there exists a function $h$ of bounded variation such that
$$
\mu (A) = \int_{A} h\, d \lambda, \,\, \lambda-\mbox{a.e.},\,\, \mbox{with $\lambda$ the Lebesgue measure},
$$
i.e.~ any absolutely continuous invariant probability measure has a version of its density function of bounded variation.
\end{Proposition}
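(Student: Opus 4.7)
The plan is to apply Theorem 1 of Lasota and Yorke \cite{LaY} essentially verbatim, as signalled by the sentence preceding the statement. That theorem says that for a piecewise $C^2$ map $T$ of an interval with $\inf|T'|>1$, every absolutely continuous $T$-invariant probability measure admits a density representative of bounded variation. So the proof reduces to verifying the hypotheses of Lasota--Yorke for $T_\alpha$.

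First I would confirm the piecewise smooth structure. Because $\alpha>0$, the digit function $d(x)=\lfloor N/x-\alpha\rfloor$ takes only finitely many values on $I_\alpha$, as the excerpt itself notes. This partitions $I_\alpha$ into finitely many cylinder intervals $\Delta_d$, on each of which $T_\alpha(x)=N/x-d$ is a restriction of a Möbius transformation, hence $C^\infty$ and strictly monotone. In particular, each branch has a $C^2$ extension to the closed cylinder.

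Second I would verify the uniform expansion condition. Since $|T_\alpha'(x)|=N/x^2$ is decreasing in $x$, its minimum on $I_\alpha$ is $N/(\alpha+1)^2$, and the hypothesis $\alpha\le \sqrt{N}-1$ forces $(\alpha+1)^2\le N$, giving $|T_\alpha'|\ge 1$ on all of $I_\alpha$. For $\alpha<\sqrt{N}-1$ the inequality is strict uniformly. In the marginal case $\alpha=\sqrt{N}-1$ the derivative touches $1$ only at the single endpoint $x=\alpha+1$; replacing $T_\alpha$ by a sufficiently high iterate $T_\alpha^n$ and using the chain rule restores the strict uniform bound, and since absolute continuity and $T_\alpha$-invariance pass to iterates, this does not affect the conclusion.

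With both hypotheses in place, Lasota--Yorke provides the Perron--Frobenius inequality $V(\mathcal{L}_{T_\alpha}^n f)\le \rho^n V(f)+C\|f\|_{L^1}$ with $\rho\in(0,1)$ and $C>0$. Applied to a suitable BV starting density and combined with a Helly selection/Cesàro averaging argument, this forces every $L^1$ fixed point of $\mathcal{L}_{T_\alpha}$---in particular the density $h$ of the given $\mu$---to coincide a.e.\ with a function of bounded variation. The only non-trivial obstacle in this plan is the boundary case $\alpha=\sqrt{N}-1$, where the expansion $>1$ is only recovered after passing to an iterate; otherwise the proposition is a direct citation of the 1973 theorem.
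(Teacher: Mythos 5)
Your proposal is correct and takes exactly the same route as the paper, which simply observes that $\inf|T_{\alpha}'|>1$ and cites Theorem~1 of Lasota--Yorke; your verification of the piecewise $C^2$ structure and the expansion bound is just a fleshed-out version of that one-line argument. If anything you are more careful than the paper at the marginal parameter $\alpha=\sqrt{N}-1$, where $|T_{\alpha}'(\alpha+1)|=1$ and the strict inequality claimed in the paper only holds after passing to an iterate, as you note.
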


We have the following result from~\cite{dJKN}.
\begin{Theorem}\label{ergodic}
Let $N \in \N_{\geq2}$. Then there is a unique absolutely continuous invariant probability measure $\mu_\alpha$ such that $\T$ is ergodic with respect to $\mu_\alpha$.
\end{Theorem}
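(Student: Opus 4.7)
The plan is to leverage Proposition~\ref{before ergodic}: Lasota--Yorke already yields at least one absolutely continuous invariant probability measure $\mu_\alpha$ with density of bounded variation, so the task reduces to uniqueness of such a measure and ergodicity of $\T$ with respect to it. More precisely, the full Lasota--Yorke decomposition describes the set of ACIMs as a finite-dimensional simplex whose extreme points are mutually singular ergodic ACIMs, each supported on a finite union of intervals whose boundary points lie in the forward orbits of the endpoints $\alpha$ and $\alpha+1$. Consequently, once one rules out the existence of more than one such ergodic component, both uniqueness and ergodicity of $\mu_\alpha$ follow.

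To achieve that, I would prove a covering property. Let $K\subset\I$ denote the ``attractor,'' the closure of the set of points whose forward $\T$-orbit never lands in any gap $\Delta_i$ of the kind illustrated before the statement. By construction $K$ is $\T$-invariant modulo null sets and every ACIM is concentrated on $K$. Since $|\T'(x)|=N/x^{2}\geq N/(\alpha+1)^{2}\geq 1$, with strict inequality wherever $\alpha<\sqrt{N}-1$ or $x<\alpha+1$, and since each branch of $\T$ is a M\"obius map, a standard R\'enyi-type estimate delivers bounded distortion on cylinder intervals. A Lebesgue-density argument combined with exponential growth of cylinder lengths then shows that any subinterval $J\subset K$ of positive measure admits an iterate $n$ for which $\T^{n}(J)$ contains a full cylinder, and a further bounded number of iterates sends it onto $K$. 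Two distinct ergodic ACIMs would then be forced to have intersecting supports and so to coincide, producing the desired uniqueness and ergodicity.

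The main obstacle is the gap structure: the attractor $K$ and the combinatorics of its full cylinders depend delicately on $(N,\alpha)$, and one must exclude the scenario in which $\T$ cyclically permutes several disjoint pieces of $K$, which would yield several ergodic components. I would dispose of this using the explicit description of the orbits of the endpoints $\alpha$ and $\alpha+1$ available in~\cite{dJKN}: these orbits trace out the gap boundaries, and a case analysis shows that in every parameter regime at least one endpoint orbit visits every basic interval of the natural generating partition of $K$. This forces some power $\T^{n_{0}}$ to possess a full branch, providing the covering property required above. As a sanity check, the same conclusion can be read off a posteriori from the planar natural extension $(\Omega_\alpha,\mathcal{T}_\alpha)$ constructed in later sections: its ergodicity, established via an explicit Kakutani--Rokhlin-type decomposition, projects onto ergodicity of $\T$.
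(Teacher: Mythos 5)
You should first be aware that the paper offers no proof of Theorem~\ref{ergodic} at all: it is imported verbatim from \cite{dJKN} (``We have the following result from~\cite{dJKN}''), with only the Lasota--Yorke existence statement recorded separately as Proposition~\ref{before ergodic}. So your proposal cannot be checked against an argument in this text and must stand on its own; the skeleton you choose --- Lasota--Yorke existence plus the Li--Yorke-type spectral decomposition into finitely many mutually singular ergodic components supported on finite unions of intervals, followed by exclusion of more than one component --- is the right frame and consistent with what \cite{dJKN} does, but the sketch stops exactly where the real work begins. The crucial step is the covering property, and you assert it rather than prove it. The claim that some iterate $\T^{n}(J)$ of a positive-measure interval $J$ ``contains a full cylinder, and a further bounded number of iterates sends it onto $K$'' silently assumes that full cylinders exist: for digit set $\{d,\dots,d+i\}$ only the interior cylinders $\Delta_{d+1},\dots,\Delta_{d+i-1}$ are guaranteed to be full, and when there are only two digits (e.g.\ the paper's example $N=51$, $\alpha=6$, which is precisely the regime where gaps occur) there need be no full branch at all. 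Your proposed remedy --- ``a case analysis shows that in every parameter regime at least one endpoint orbit visits every basic interval'' --- is the theorem to be proved, not a step in its proof; nothing in the sketch explains why visiting every basic interval would force a full branch of some power $\T^{n_0}$, nor how the possibility that $\T$ cyclically permutes pieces of $K$ is actually excluded. There is also a small analytic wrinkle you gloss over: $\inf_{x}|\T'(x)|=N/(\alpha+1)^2$ equals $1$ at $\alpha=\sqrt{N}-1$, so at the right endpoint of the parameter range one must pass to an iterate before the R\'enyi distortion machinery applies.

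The closing ``sanity check'' is also not available to you. Ergodicity of the planar system $(\Omega_\alpha,\mathcal{T}_\alpha)$ is deduced in Theorem~\ref{thm:NaturalExtension} \emph{from} Theorem~\ref{ergodic} (via Theorem 5.3.1 of \cite{DKa}), not the other way around, so invoking it here is circular relative to the paper's logic; moreover $\Omega_\alpha$ is only constructed for $\alpha$ in the special matching intervals $X_{N,d,i}$ with $N=\frac{d(d+i)}{i-1}$, whereas Theorem~\ref{ergodic} is asserted for every $\alpha\in(0,\sqrt{N}-1]$, so even a non-circular version of that argument would not cover the general case. In short: the reduction to ``rule out a second ergodic component'' is correct and standard, but the two devices you offer for that reduction (a full-cylinder covering argument and the natural extension) respectively fail in the gapped two-digit regime and beg the question, so the proposal has a genuine gap at its central step.
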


With these results it was shown in~\cite{dJKN}, that if $|T_{\alpha}'(x)|>2$ for all $x\in I_{\alpha}$, there will be no gaps in $I_{\alpha}$.\medskip\

In this paper we will not focus on gaps, but rather on `plateaus' whether the entropy is constant. In~\cite{KL}, simulation of the entropy of $T_{\alpha}$ is given as a function of $\alpha\in (0,\sqrt{2}-1]$) in case $N=2$; see Figure~\ref{FigEntropySimulation}.

\begin{figure}[ht]
\includegraphics[width=10cm]{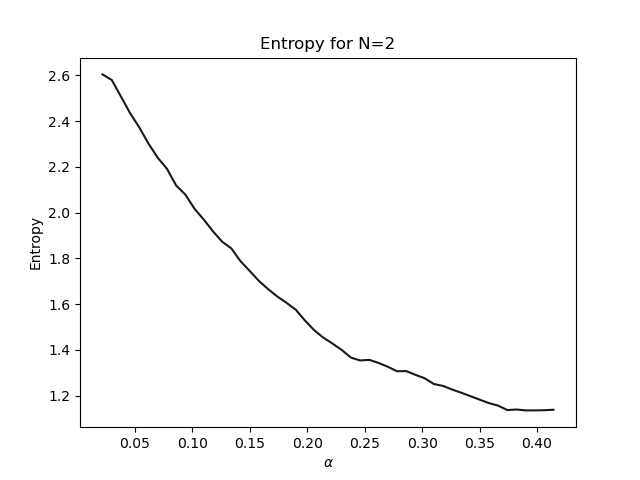}
\caption{A simulation of the entropy of $T_{\alpha}$ when $N=2$.}\label{FigEntropySimulation}
\end{figure}

In Figure~\ref{FigEntropySimulation} clearly a `plateau' of the entropy as a function of $\alpha$ is visible from $\frac{\sqrt{33}-5}{2}$ to $\sqrt{2}-1$ (which is the maximal possible value for $\alpha$ in case $N=2$; for larger values of $\alpha$ some of the digits would be equal to $0$). In~\cite{KL} it was then showed that for these values of $\alpha$ the so-called natural extensions could be built using a technique called \emph{quilting} (this technique will be explained in Section~\ref{sec:quilting}), and it could be shown that for $\alpha\in \left(\frac{\sqrt{33}-5}{2},\sqrt{2}-1\right)$ these natural extensions are metrically isomorphic. In general, a \emph{natural extension} is an almost surely minimal invertible system which has the original system (in this case $(I_{\alpha}, \mathcal{B}_{\alpha},\mu_{\alpha},T_{\alpha})$) as a factor. For continued fractions, the natural extension is (isomorphic to) some planar domain $\Omega_{\alpha}$, with an almost surely invertible map ${\mathcal T}_{\alpha}:\Omega_{\alpha}\to\Omega_{\alpha}$, given in the particular case of $N$-expansions by
\begin{equation}\label{eq:DefinitionNaturalExtensionMap}
{\mathcal T}_{\alpha}(x,y) = \left (T_{\alpha}(x),\frac{N}{d(x)+y}\right),
\end{equation}
where $d(x)\in\N$ is such, that $T_{\alpha}(x)=\frac{N}{x}-d(x)\in I_{\alpha}=[\alpha,\alpha +1)$ (cf.~(\ref{defNalphaGaussMap})). In~\cite{KL} the following result was obtained.

\begin{Theorem}\label{thm:natexonint}
For $\alpha\in\left[ \frac{\sqrt{33}-5}{2}, \sqrt{2}-1\right]$ the natural extension can be build (see Figure~\ref{Fig:NatExtN=2} below). Moreover, the invariant density $f_{\alpha}$ is given by:
\begin{eqnarray*}
f_{\alpha}(x) &=& H \Big(\frac{D}{2+Dx}\textbf{1}_{(\alpha,T(\alpha+1))} +\frac{E}{2+Ex}\textbf{1}_{(T(\alpha+1),T^2(\alpha))}+\frac{F}{2+Fx}\textbf{1}_{(T^2(\alpha),\alpha+1)}\\
&& -\frac{A}{2+Ax}\textbf{1}_{(\alpha,T^2(\alpha+1))} - \frac{B}{2+Bx}\textbf{1}_{(T^2(\alpha+1),T(\alpha))} -\frac{C}{2+Cx}\textbf{1}_{(T(\alpha),\alpha+1)} \Big), \\
\end{eqnarray*}
where $A=\frac{\sqrt{33}-5}{2}, B=\sqrt{2}-1, C=\frac{\sqrt{33}-3}{6},  D=2\sqrt{2}-2, E=\frac{\sqrt{33}-3}{2}, F=\sqrt{2}$ and $H^{-1}=\log\left(\frac{1}{32}(3+2\sqrt{2})(7+\sqrt{33})(\sqrt{33}-5)^2\right)\approx 0.25$ the normalizing constant.
\end{Theorem}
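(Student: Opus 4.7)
The plan is to construct, for each $\alpha$ in the plateau, a planar domain $\Omega_\alpha\subset I_\alpha\times[0,\infty)$ on which the natural extension map $\mathcal{T}_\alpha$ from (\ref{eq:DefinitionNaturalExtensionMap}) acts as an a.e.\ bijection preserving the measure with density proportional to $(2+xy)^{-2}$; the invariant density $f_\alpha$ is then produced by integrating out the $y$-fibre. I would first pin down the digit structure of $T_\alpha$ on $[A,B]=\bigl[\tfrac{\sqrt{33}-5}{2},\sqrt{2}-1\bigr]$ (only the digits $d=1$ and $d=2$ arise, so $T_\alpha$ has at most two branches), compute the first two iterates of the endpoints $\alpha$ and $\alpha+1$, and verify that their mutual ordering inside $I_\alpha$ is constant throughout the plateau.

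The crucial ingredient is the \emph{matching} identity: a direct Möbius computation showing that, for every $\alpha\in[A,B]$, the orbits of $\alpha^{+}$ and $(\alpha+1)^{-}$ under $T_\alpha$ align after two steps in a way that makes the proposed $\Omega_\alpha$ self-matching. Given this, I would define
$$\Omega_\alpha \;=\; \bigcup_i P_i\times[L_i,R_i],$$
where $\{P_i\}$ is the common refinement of the two $x$-partitions $\{\alpha,T_\alpha(\alpha+1),T_\alpha^2(\alpha),\alpha+1\}$ and $\{\alpha,T_\alpha^2(\alpha+1),T_\alpha(\alpha),\alpha+1\}$ appearing in the statement, and the fibre heights $L_i\in\{A,B,C\}$ and $R_i\in\{D,E,F\}$ are real numbers independent of $\alpha$. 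A finite case analysis then checks that $\mathcal{T}_\alpha$ carries each rectangle onto a disjoint union of rectangles of $\Omega_\alpha$, so that $\mathcal{T}_\alpha|_{\Omega_\alpha}$ is an a.e.\ bijection. Invariance of $(2+xy)^{-2}\,dx\,dy$ is the usual Jacobian calculation: on the branch with digit $d$ one has
$$2 + T_\alpha(x)\cdot\frac{2}{d+y} \;=\; \frac{2(2+xy)}{x(d+y)}, \qquad |{\det D\mathcal{T}_\alpha}| \;=\; \frac{4}{x^2(d+y)^2},$$
so the density transforms correctly under $\mathcal{T}_\alpha$.

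Integrating the fibres by the identity
$$\int_{L}^{R}\frac{dy}{(2+xy)^2} \;=\; \frac{1}{2}\!\left(\frac{R}{2+xR}-\frac{L}{2+xL}\right)$$
reproduces the stated formula for $f_\alpha$, with $H$ absorbing the factor $1/2$ together with the total mass on $\Omega_\alpha$. The main obstacle is the combinatorial bookkeeping behind the matching step: the $x$-partition moves with $\alpha$ while the six fibre heights $A,B,C,D,E,F$ do not, so one has to verify uniformly over $[A,B]$ that images of rectangles tile $\Omega_\alpha$ without gaps or overlaps. The cleanest route is the quilting technique of~\cite{KL}: start from the trivial matching at $\alpha=\sqrt{2}-1$ (where $T_\alpha(\alpha+1)=\alpha$ makes the first piece $(\alpha,T_\alpha(\alpha+1))$ degenerate and the rectangle count drop) and track which rectangles must be added and removed as $\alpha$ decreases, with the construction surviving precisely up to the boundary $\alpha=\tfrac{\sqrt{33}-5}{2}$, at which a further matching relation must take over.
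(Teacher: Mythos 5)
Your overall architecture is the right one and matches how this theorem is actually established (in~\cite{KL}, and in the general form of Theorems~\ref{thm:MatchingIn3Steps}, \ref{thm:PlanarDomainNaturalExtension} and~\ref{thm:NaturalExtension} of this paper): build a planar domain $\Omega_\alpha$ fibred over the $x$-partition generated by the orbits of the endpoints, check that ${\mathcal T}_\alpha$ is an a.e.\ bijection of $\Omega_\alpha$ preserving the density $C(2+xy)^{-2}$ via the Jacobian computation (your identity $2+T_\alpha(x)\cdot\tfrac{2}{d+y}=\tfrac{2(2+xy)}{x(d+y)}$ and the fibre integral are both correct), and project. The quilting idea for moving $\alpha$ across the plateau is also the right tool for the uniformity issue you flag.

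However, there is a concrete error that breaks the central combinatorial step. For $N=2$ on the plateau $\bigl[\tfrac{\sqrt{33}-5}{2},\sqrt2-1\bigr]$ the digit set is $\{1,2,3,4\}$ (this is the case $d=1$, $i=3$ of $N=\tfrac{d(d+i)}{i-1}$), not $\{1,2\}$: for $x$ near $\alpha\approx 0.41$ one has $\lfloor 2/x-\alpha\rfloor=4$. So $T_\alpha$ has \emph{four} branches, and the six fibre heights $A<B<C<D<E<F$ are forced precisely by the lamination of the images of the four two-dimensional cylinders $\Theta_1,\dots,\Theta_4$, through the relations $A=\tfrac{2}{4+E}$, $C=\tfrac{2}{4+A}=\tfrac{2}{3+E}$, $E=\tfrac{2}{1+C}$, $F=\tfrac{2}{1+B}$, $D=\tfrac{2}{2+B}=\tfrac{2}{1+F}$ (equations~(\ref{eq:RelationsBetweenHeights1})--(\ref{eq:RelationsBetweenHeights2}) with $d=1$, $i=3$). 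With only two branches your ``finite case analysis'' cannot produce this height structure and the images of your rectangles will not tile $\Omega_\alpha$. Relatedly, the matching is in \emph{three} steps, $T_\alpha^3(\alpha)=T_\alpha^3(\alpha+1)$; after two steps the orbits sit in \emph{adjacent} cylinders and satisfy $\tfrac{2}{T_\alpha^2(\alpha)}=\tfrac{2}{T_\alpha^2(\alpha+1)}-1$, which is what makes ${\mathcal T}_\alpha(\Theta_k)$ fit snugly on top of ${\mathcal T}_\alpha(\Theta_{k+1})$. Finally, your case analysis must also be split according to which cylinder $T_\alpha^2(\alpha)$ lands in (the sets $X_{2,1,3,k}$, $k=1,2,3$); the refinement of the endpoint-orbit partition alone does not suffice to verify bijectivity, since the vertical cuts of $\Omega_\alpha$ relevant for ${\mathcal T}_\alpha$ are the four cylinder boundaries.
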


\begin{figure}[!htb]
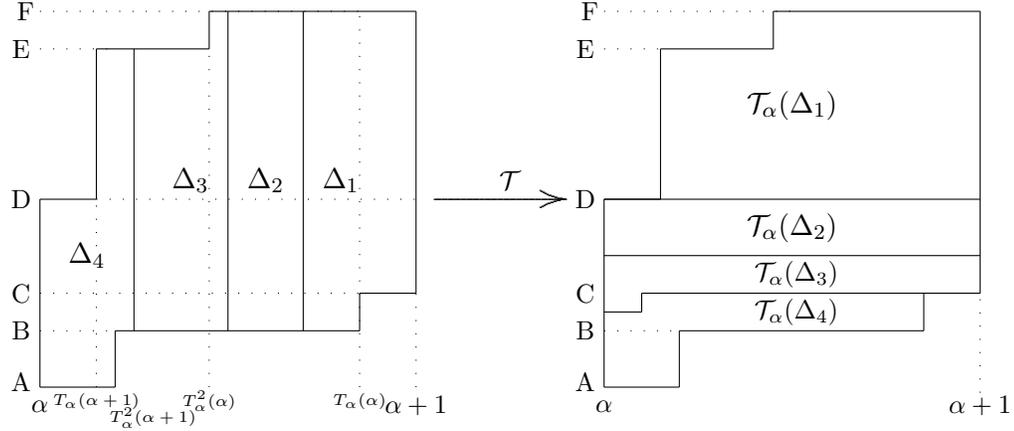

$$
\beginpicture
\setcoordinatesystem units <0.25cm,0.25cm>
\setplotarea x from -7 to 52, y from -1 to 22

\putrule from 0 0 to 4 0		
\putrule from 4 0 to 4 3
\putrule from 4 3 to 17 3
\putrule from 17 3 to 17 5
\putrule from 17 5 to 20 5
\putrule from 20 5 to 20 20
\putrule from 9 20 to 20 20
\putrule from 9 18 to 9 20
\putrule from 3 18 to 9 18
\putrule from 3 10 to 3 18
\putrule from 0 10 to 3 10
\putrule from 0 0 to 0 10

\putrule from 14 3 to 14 20
\putrule from 10 3 to 10 20
\putrule from 5 3 to 5 18

\arrow<10pt> [0.2,0.67] from 21 10 to 28 10
\put {$\mathcal{T}$} at 25 11

\putrule from 30 0 to 34 0		
\putrule from 34 0 to 34 3
\putrule from 34 3 to 47 3
\putrule from 47 3 to 47 5
\putrule from 47 5 to 50 5
\putrule from 50 5 to 50 20
\putrule from 39 20 to 50 20
\putrule from 39 18 to 39 20
\putrule from 33 18 to 39 18
\putrule from 33 10 to 33 18
\putrule from 30 10 to 33 10
\putrule from 30 0 to 30 10

\putrule from 30 10 to 50 10
\putrule from 30 7 to 50 7
\putrule from 32 5 to 50 5
\putrule from 32 4 to 32 5
\putrule from 30 4 to 32 4

\put {A} at -1 0.25
\put {F} at -0.75 20
\put {B} at -1 3
\put {C} at -1 5
\put {D} at -1 10
\put {E} at -1 18

\put {$\alpha$} at 0 -1
\put {$\alpha+1$} at 20 -1
\put {$\alpha$} at 30 -1
\put {$\alpha+1$} at 50 -1
\put {\tiny $T_{\alpha}(\alpha+1)$} at 3 -0.75
\put {\tiny $T_{\alpha}^2(\alpha+1)$} at 6 -1.6
\put {\tiny $T_{\alpha}(\alpha)$} at 17 -0.75
\put {\tiny $T_{\alpha}^2(\alpha)$} at 9 -0.75

\put {A} at 29 0.25
\put {F} at 29.25 20
\put {B} at 29 3
\put {C} at 29 5
\put {D} at 29 10
\put {E} at 29 18
\put {\large $\Delta_1$} at 16 11
\put {\large $\Delta_2$} at 12 11
\put {\large $\Delta_3$} at 8 11
\put {\large $\Delta_4$} at 2.5 7

\put {\large $\mathcal{T}_{\alpha}(\Delta_1)$} at 40 15
\put {\large $\mathcal{T}_{\alpha}(\Delta_2)$} at 40 8.5
\put { $\mathcal{T}_{\alpha}(\Delta_3)$} at 40 6
\put { $\mathcal{T}_{\alpha}(\Delta_4)$} at 40 4

\setdots
\putrule from 0 3 to 4 3
\putrule from 0 5 to 18 5
\putrule from 3 10 to 20 10
\putrule from 0 20 to 17 20
\putrule from 0 18 to 4 18

\putrule from 50 -1 to 50 5

\putrule from 30 3 to 34 3
\putrule from 30 18 to 33 18
\putrule from 30 20 to 40 20

\putrule from 3 7.5 to 3 10
\putrule from 3 0 to 3 6

\putrule from 9 0 to 9 18
\putrule from 17 0 to 17 20
\putrule from 20 0 to 20 5
\endpicture
$$
\caption{A planar natural extension in case $N=2$ for $\alpha\in\left( \frac{\sqrt{33}-5}{2}, \sqrt{2}-1\right)$.}\label{Fig:NatExtN=2}

\end{figure}

In this paper we will show, that for every integer $N\geq 2$ such plateaux exist, and give them explicitly. The number of such plateaux will be a function of $N$.

\section{A certain set of $\alpha$ for which there is matching for $T_{\alpha}$ in 3 steps for every $N\in\N$, $N\geq 2$, and a related planar domain for ${\mathcal T}_{\alpha}$}\label{sec:matching}
Let $N\in\N$, $N\geq 2$ arbitrary but fixed, and let\footnote{Usually we suppress in our notation the dependence of the various maps and domains on $N$.} $0<\alpha \leq N-1$. Let $T_{\alpha}: [\alpha ,\alpha +1]\to [\alpha , \alpha +1)$ be the Gauss map defined as in~(\ref{def:NGaussMap}), where $\{ d,d+1,\dots,d+i\}$ is the (finite) set of partial quotients (i.e.\ digits) for $T_{\alpha}$ (in~\cite{dJKN} is was shown that $i\in\N$, $i\geq 1$, so the number of partial quotients $i+1$ is at least 2).

Consider the partition ${\mathcal P} = \bigcup I_k$ of $[\alpha,\alpha+1]$, where $I_k=\{x\, |\, d_1(x)=k\}$. Note that we choose the largest digit $d+i$ always in such a way, that each partition element is an interval (see also~\cite{dJKN} for a small discussion about this).

In~\cite{dJKN} it was mentioned (cf.~Lemma~1 in~\cite{dJKN}), that given $N$ and $\alpha$, and $d(\alpha )$ as the largest possible digit of $T_{\alpha}$, one has that
$$
d(\alpha )\geq N-1\,\, {\text{if and only if}}\,\, \alpha<1.
$$
We have the following, similar result on the smallest possible digit $d=d(\alpha +1)$ of $T_{\alpha}$.

\begin{Lemma}\label{lemma1}
Let $N\in\mathbb{N}$, $N\geq 2$, and $0 < \alpha \leq \sqrt{N}-1$, then for the smallest possible digit $d=d(\alpha +1)$ of $T_{\alpha}$ we have that $d\in \{ 1,2,\dots,N-1\}$ and $\lim\limits_{\alpha\downarrow 0} d = N-1$.
\end{Lemma}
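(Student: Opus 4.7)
The plan is to reduce the lemma to elementary monotonicity of a single real-valued function. Since $d(x) = \lfloor N/x - \alpha\rfloor$ is decreasing in $x$ on $I_\alpha = [\alpha, \alpha+1]$, the smallest digit is attained at the right endpoint, giving $d = d(\alpha+1) = \lfloor g(\alpha)\rfloor$, where $g(\alpha) := N/(\alpha+1) - \alpha$. Thus both assertions follow from the range of $g$ on $(0, \sqrt{N}-1]$.

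First I would check monotonicity: $g'(\alpha) = -N/(\alpha+1)^2 - 1 < 0$, so $g$ is strictly decreasing on $[0, \sqrt{N}-1]$. At $\alpha = 0$ one has $g(0) = N$, and at the upper endpoint $\alpha = \sqrt{N}-1$ (so $\alpha+1 = \sqrt{N}$) one has $g(\sqrt{N}-1) = \sqrt{N} - (\sqrt{N}-1) = 1$. Consequently $g$ maps $(0, \sqrt{N}-1]$ bijectively onto $[1, N)$, and therefore $d = \lfloor g(\alpha)\rfloor \in \{1, 2, \ldots, N-1\}$, which is the first assertion of the lemma.

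For the limit, continuity of $g$ at $0$ together with $g(0) = N$ yields, for every $\varepsilon \in (0,1)$, some $\delta > 0$ with $g(\alpha) > N - \varepsilon > N - 1$ for all $\alpha \in (0, \delta)$. Combined with the strict bound $g(\alpha) < N$ on $(0, \sqrt{N}-1]$ from monotonicity, this gives $N - 1 < g(\alpha) < N$, hence $d = N - 1$ throughout $(0, \delta)$. In particular $\lim_{\alpha \downarrow 0} d = N - 1$.

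There is no real obstacle here: once one identifies $d(\alpha+1)$ with $\lfloor g(\alpha)\rfloor$, everything reduces to the extremal values of a decreasing elementary function on a closed interval. The only point requiring a moment's thought is the choice of endpoint — the smallest digit must come from the largest value of $x$ — and the symmetry with Lemma~1 of~\cite{dJKN} (which concerns the largest digit $d(\alpha)$) makes this transparent.
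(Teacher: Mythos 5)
Your proof is correct and rests on the same elementary inequality as the paper's, namely $d \leq N/(\alpha+1)-\alpha < N$; recasting this as monotonicity of $g(\alpha)=N/(\alpha+1)-\alpha$ together with its limiting value $g(0)=N$ is only a cosmetic repackaging of the paper's argument. If anything, your version is slightly more complete, since the endpoint computation $g(\sqrt{N}-1)=1$ also establishes the lower bound $d\geq 1$, which the paper's proof leaves implicit.
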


\begin{proof}
From $\alpha < N/(\alpha + 1) - d$, it follows that $\alpha^2 + (d + 1)\alpha + d - N < 0$. Since $\alpha,d>0$ it follows that $d<N$.
Furthermore, if $\alpha$ tends to $0$ it follows that $d$ tends to $N-1$. Note that if $\alpha = 0$, we have that $d=N$ (cf.~\cite{DKW}).
\end{proof}

The following result gives bounds on the number $i+1$ of possible digits.

\begin{Lemma}\label{lemma2}
For all $N\in\N$, $N\geq 2$, and $0 < \alpha \leq \sqrt{N}-1$, $d\geq 1$, one has $\frac{d}{\alpha}\leq i < \frac{d+1}{\alpha}+2$, where $i+1$ is the number of possible digits. Furthermore, $\lim\limits_{\alpha\downarrow 0} i = +\infty$.
\end{Lemma}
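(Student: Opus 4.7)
The plan is to exploit the monotonicity of $d(x)=\lfloor N/x-\alpha\rfloor$: since $N/x$ is strictly decreasing on $(0,\infty)$, the smallest partial quotient is attained at the right endpoint $x=\alpha+1$ and the largest at the left endpoint $x=\alpha$. Hence $d=d(\alpha+1)$ and $d+i=d(\alpha)$, which by the definition of the floor gives the two inequalities
\begin{equation*}
d \;\leq\; \frac{N}{\alpha+1}-\alpha \;<\; d+1, \qquad d+i \;\leq\; \frac{N}{\alpha}-\alpha \;<\; d+i+1.
\end{equation*}
All the work is to convert these into bounds on $i$ by eliminating $N$.

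For the lower bound, I would multiply the left half of the first display by $\alpha+1$ to obtain $(d+\alpha)(\alpha+1)\leq N$, and then divide by $\alpha$ to get
\begin{equation*}
\frac{N}{\alpha}\;\geq\; d+\frac{d}{\alpha}+\alpha+1.
\end{equation*}
Combining this with the strict lower bound $d+i>\tfrac{N}{\alpha}-\alpha-1$ coming from the second display yields $d+i>d+d/\alpha$, hence $i>d/\alpha$, which in particular gives the required $d/\alpha\leq i$.

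For the upper bound, the symmetric manipulation of the right half of the first display gives $N<(d+1+\alpha)(\alpha+1)$, so that
\begin{equation*}
\frac{N}{\alpha}\;<\;(d+1)+\frac{d+1}{\alpha}+\alpha+1.
\end{equation*}
Plugging this into $d+i\leq N/\alpha-\alpha$ and cancelling $\alpha$ produces $d+i<d+2+(d+1)/\alpha$, i.e.\ $i<(d+1)/\alpha+2$, as desired. Finally, the limit statement follows immediately from the lower bound and the hypothesis $d\geq 1$: $i>d/\alpha\geq 1/\alpha\to\infty$ as $\alpha\downarrow 0$.

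This is really a direct algebraic manipulation once the right endpoints for the smallest and largest digits are identified; I do not anticipate any serious obstacle. The only point that deserves a line of care is to note that since $N/x-\alpha$ is continuous and strictly decreasing on $[\alpha,\alpha+1]$, every integer in $\{d,d+1,\dots,d+i\}$ is indeed realised as a value of the digit function, so that ``the number of digits'' is exactly $i+1$ and the counting agrees with the chosen labelling $d+i=d(\alpha)$.
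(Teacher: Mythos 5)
Your proof is correct and takes essentially the same route as the paper: both arguments combine $(\alpha+1)(\alpha+d)\leq N$ with $d+i\geq N/\alpha-\alpha-1$ for the lower bound, and $N<(\alpha+1)(\alpha+d+1)$ with $d+i\leq N/\alpha-\alpha$ for the upper bound, the only cosmetic difference being that you derive these from the floor-function definition of the endpoint digits while the paper phrases them as $T_{\alpha}$ mapping $[\alpha,\alpha+1]$ into $[\alpha,\alpha+1)$. Your closing remarks on the limit and on every digit in $\{d,\dots,d+i\}$ being realised are correct and slightly more explicit than the paper, which leaves the limit statement to the reader.
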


\begin{proof} ($i$)\, Since $T_{\alpha}$ is a map from $[\alpha, \alpha +1]$ to $[\alpha, \alpha +1)$ we have that $\alpha \leq N/(\alpha + 1) - d$ (which is the same as saying that $T_{\alpha}(\alpha +1)\geq \alpha$), it follows that $(\alpha + 1)(\alpha + d) \leq N $;
and from $N/\alpha - (d+ i) \leq \alpha + 1$, one trivially has that $N/\alpha - (\alpha + 1) \leq d + i$. Then
$(\alpha + 1)(\alpha + d)/\alpha -(\alpha + 1) \leq d + i$, and one has that $\alpha+(d+1)+d/\alpha-\alpha - 1\leq d+i$,
yielding that $i\geq d/\alpha$.

($ii$)\, Again since $T_{\alpha}$ is a map from $[\alpha, \alpha +1]$ to $[\alpha, \alpha +1)$ we have that  $N/(\alpha + 1) - d < \alpha + 1$; one immediately sees that $N < (\alpha + 1)(\alpha + d + 1)$; from $\alpha \leq N/\alpha -(d+ i)$, it immediately follows that $(d + i) \leq N/\alpha - \alpha$. Combining this yields that $d + i < (\alpha + 1)(\alpha + d + 1)/\alpha - \alpha = d+2+(d+1)/\alpha$; we find that $i < (d+1)/\alpha+2$.
\end{proof}

Now define ${\mathcal A}_{N,d,i}$ be the set of all $\alpha\in (0,\sqrt{N}-1]$ with digit set $\{d,d+1,\dots,d+i\}$. Furthermore, we define the sets $X_{N,d,i}$ and $X_{N,d,i,k}$ as follows:
\begin{eqnarray}\label{def:X(N,d,i)}
X_{N,d,i} &=& \{\alpha\in {\mathcal A}_{N,d,i}\, \Big{|}\, T_{\alpha}(\alpha)\in I_d^o, T_{\alpha}(\alpha+1)\in I_{d+i}^o\};\\
X_{N,d,i,k} &=& \{\alpha\in X_{N,d,i}\, \Big{|}\, T_{\alpha}^{2}(\alpha)\in I_k, T^2_{\alpha}(\alpha+1)\in I_{k+1}\},\quad \text{for $k=d,\dots,d+i-1$}.\label{def:X(N,d,i,k)}
\end{eqnarray}
Due to the fact that $\left| T_{\alpha}'(x)\right| >1$ for $x\in [\alpha,\alpha+1)$ we have that
$$
X_{N,d,i}=\left\{\alpha\in \Omega_{N,d,i}\,\, \Big{|}\,\, \frac{N}{d + 1 + \alpha} < \frac{N}{\alpha} - (d +i) < \alpha + 1, \,\,\, \alpha < \frac{N}{\alpha+1} - d < \frac{N}{d + i + \alpha} \right\}.
$$
In the next theorem we show that for $N\in\N$, $N\geq 2$, for which there exist positive integers $d$ and $i$ such that $N=\frac{d(d+i)}{i-1}$, for $\alpha\in{X_{N,d,i}}$ the corresponding maps $T_{\alpha}$ \emph{synchronize}\footnote{In many recent papers this property is called \emph{matching}.} in 3 steps; $T_{\alpha}^3(\alpha) = T_{\alpha}^3(\alpha +1)$. This property is key for us, as it helps us to construct the \emph{natural extensions} of the dynamical systems $([\alpha,\alpha +1), T_{\alpha})$, but also to understand why for such values of $\alpha$ the entropy is constant. At first it might not be clear that for every integer $N\geq 2$ positive integers $i$ and $d$ exist for which $N=\frac{d(d+i)}{i-1}$; this will be investigated in Proposition~\ref{prop:AtLeast3Solutions}.

\begin{Proposition}\label{prop:AtLeast3Solutions}
Let $N\geq 2$ be an integer, and let $D(N)$ be the number of pairs of integers $(d,i)$ with $d\geq 1$, $i\geq 2$, and $N = \frac{d(d+i)}{i-1}$.
Then if $N=2$ we have that $D(N)=1$, if $N=3,4$ we have that $D(N)=2$, and for $N\geq 5$ we have that $D(N)$ is at least three and at most $M(N) = (\sigma_0(N)-1)(\sigma_0(N+1)-1)$. Here $\sigma_0:\N\to\N$ is the divisor function of $n\in\N$, defined by:
\begin{equation}\label{def:divisorfunction}
\sigma_0(n) = \sum_{d|n} d^0,\quad \text{for $n\in\N$}.
\end{equation}
\end{Proposition}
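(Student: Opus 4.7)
The plan is to reparametrize each solution $(d,i)$ by a divisor of $N(N+1)$, so that $D(N)$ becomes a purely arithmetic quantity.

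\emph{Step 1 (Reduction to divisors of $N(N+1)$).} I would rewrite $N=d(d+i)/(i-1)$ as $i(N-d)=d^{2}+N$. Both sides must be positive integers, so $1\le d\le N-1$, and the substitution $m:=N-d$ gives $i=(d^{2}+N)/m$. Since $d\equiv N\pmod{m}$, one has $d^{2}+N\equiv N(N+1)\pmod{m}$, so $i$ is an integer if and only if $m\mid N(N+1)$. Conversely, given any divisor $m\in\{1,\dots,N-1\}$ of $N(N+1)$, the pair $d=N-m$, $i=(d^{2}+N)/m$ is admissible; the case $i=1$ is automatically excluded because it would force $d(d+1)=0$. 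Hence $D(N)$ equals the number of divisors of $N(N+1)$ lying in $\{1,\dots,N-1\}$.

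\emph{Step 2 (Upper bound).} Since $\gcd(N,N+1)=1$, every divisor $m$ of $N(N+1)$ factors uniquely as $m=m_{1}m_{2}$ with $m_{1}\mid N$ and $m_{2}\mid N+1$. The constraint $m\le N-1$ forces $m_{1}\ne N$ and $m_{2}\ne N+1$, removing one choice from each factor and giving $D(N)\le(\sigma_{0}(N)-1)(\sigma_{0}(N+1)-1)=M(N)$.

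\emph{Step 3 (Lower bound and small cases).} For $N\ge 5$ I would exhibit three distinct admissible values of $m$: the divisor $m=1$ always works; the divisor $m=2$ works since one of $N,N+1$ is even; and a third is $m=(N+1)/2$ when $N$ is odd, or $m=N/2$ when $N$ is even, each of which lies in $\{3,\dots,N-1\}$ once $N\ge 5$ and divides one of $N$ or $N+1$. The cases $N=2,3,4$ are settled by direct enumeration of divisors of $6$, $12$, $20$ in $\{1,\dots,N-1\}$, giving $D(2)=1$ and $D(3)=D(4)=2$. No serious obstacle is anticipated; the mildly delicate point is verifying that $i\ge 2$ comes for free whenever $m\le N-1$ and $m\mid N(N+1)$, so that the correspondence between pairs $(d,i)$ and divisors is a clean bijection.
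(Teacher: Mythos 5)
Your proof is correct and follows essentially the same route as the paper: the substitution $m=N-d$ (the paper's $k$), the identity $i=d(d+1)/m+1$, the coprimality of $N$ and $N+1$ together with multiplicativity of $\sigma_0$ for the upper bound, and the same three explicit divisors $1$, $2$, and $N/2$ or $(N+1)/2$ for the lower bound when $N\geq 5$. Your Step 1 is in fact slightly sharper than the paper's argument, since you check that $i\geq 2$ holds automatically (because $d(d+1)/m$ is a positive integer), turning the correspondence into a genuine bijection and giving $D(N)$ exactly as the number of divisors of $N(N+1)$ in $\{1,\dots,N-1\}$, whereas the paper only records the resulting upper bound.
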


\begin{proof}
Note that $N = \frac{d(d+i)}{i-1}$ can be rewritten as $i = \frac{d^2+N}{N-d}$. Setting for convenience $k = N - d$, we see that we have that
\begin{equation}\label{eq:PossibleValuesOfi}
i = \frac{d(d+1)}{k} + 1.
\end{equation}
Now we let $k$ run from 1 to $N-1$; clearly for at most $N-1$ values of $k$ we have that $i\in\Z$, but obviously this upper bound is far from sharp. If $d=N-1$ we have $k=1$, and obviously we have $i=(N-1)N+1\in\N$. If $d=N-2$ we have $k=2$, and obviously $i$ from~(\ref{eq:PossibleValuesOfi}) is an integer which is at least 2.

In case $N$ is \emph{even}, then one easily checks that if $d=\frac{N}{2}$, $i=\frac{N+4}{2}$, one has that $N = \frac{d(d+i)}{i-1}$. In case $N$ is odd, one again checks very easily that if $d=\frac{N-1}{2}$, $i=\frac{N+1}{2}$, one has that $N = \frac{d(d+i)}{i-1}$.

If $N=2$, we obviously only have as the only positive solution to~(\ref{eq:PossibleValuesOfi}): $(d=1,i=3)$. In case $N=3$ we have only 2 solutions: $(d=1,i=4)$ and $(d=2,i=7)$. For $N=3$ we have $\frac{N+1}{2}=2$, so there is no third solution. In case $N=4$ we again have 2 solutions: $(d=2,i=4)$ and $(d=3,i=13)$. In this case we have that $\frac{N}{2}=2$, which we already saw a a solution for $d$. In case $N\geq 5$ we indeed have at least 3 solutions: in case $N$ is even we have that $\frac{N}{2}<N-2$ (so $N>4$), and in case $N$ is odd we have $\frac{N-1}2<N-2$ (so $N>3$).

If we substitute $d=N-k$ in~(\ref{eq:PossibleValuesOfi}), we trivially find that:
\begin{equation}\label{eq:PossibleValuesOfi-2}
i = \frac{N(N+1)}{k} - 2N + k.
\end{equation}
Now for $N\in\N$ we have that $N$ and $N+1$ are relative prime, so we have that $\sigma_0(N(N+1)) = \sigma_0(N)\sigma_0(N+1)$ as $\sigma_0$ is an arithmetic function. Note that $k$ cannot be $N$ nor $N+1$ (in the first case we would have that $d=0$, and in the second case even $d=-1$; these are both impossible since digits $d$ are at least $1$), so we find from~(\ref{eq:PossibleValuesOfi-2}) and the fact that $i\in\N$, $i\geq 2$, that $D(N)$ is at most the number of divisors $k\in\{ 1,2,\dots,N-1\}$ of $N(N+1)$ for which $i$ from~(\ref{eq:PossibleValuesOfi-2}) is an integer at least 2.
\end{proof}
\medskip\

\begin{figure}[htb]
\minipage{0.5\textwidth}
\center{\includegraphics[trim=4cm 3cm 1cm 4cm, width=40mm]{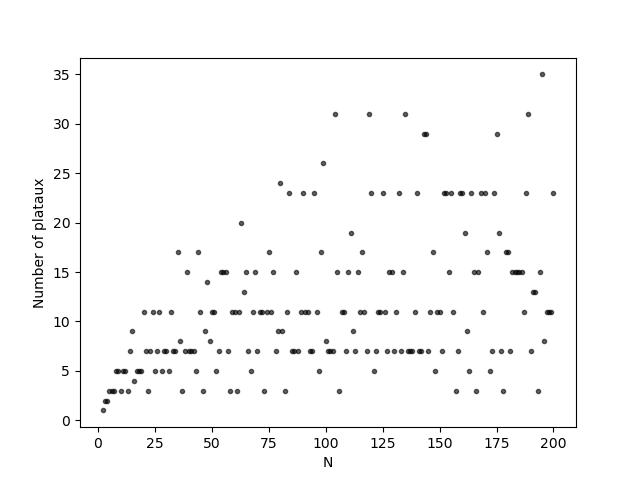}}

\endminipage
\minipage{0.5\textwidth}
\center{\includegraphics[trim=4cm 4cm 1cm 4cm, width=40mm]{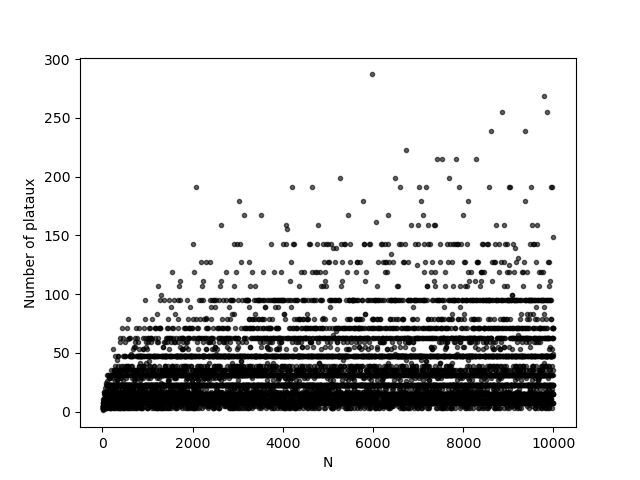}}
\endminipage
\vspace{1cm}\caption{$D(N)$ for $N=2,\dots,200$ (left) and $N=2,\dots,10.000$ (right) } \label{fig:D(N)forN=200andN=10000}
\end{figure}

\begin{Remarks}\label{rem:RemarkAboutDivisors}{\rm
($i$) For every real or complex $x$ one can define the \emph{sum of positive divisors function} $\sigma_x$ as
$$
\sigma_x(n) = \sum_{d|n} d^x,\quad \text{for $n\in\N$}.
$$
In this paper we are only interested in $x=0$, but e.g.\ $x=1$ yields the sum of all positive divisors of $n$, and $s(n)=\sigma_1(n)-1$ is the so-called \emph{aliquot sum}, i.e.\ the sum of all proper divisors of $n\in\N$. Obviously we have $\sigma_0(p)=2$ for all prime numbers $p$, and therefore $\liminf_{n\to\infty} \sigma_0(n) = 2$. On the other hand, it was shown by Severin Wigert (cf.~\cite{HW}, pp.\ 342–347, Section~18.1) that
$$
\limsup_{n\to\infty} \frac{\log \sigma_0(n)}{\log n/\log\log n} = \log 2.
$$
($ii$) Clearly $D(N)=3$ if $N\geq 5$ and $(N+1)/2$ are prime, or if $N/2$ and $N+1$ are prime (e.g.\ for $N=5$, $6$, $7$, $10$, $22$, $37$, $58$, $61$, $73$, $82$, $157, \dots$, $613,\dots$). The right-hand side figure in Figure~\ref{fig:D(N)forN=200andN=10000} seems to suggest that ${\displaystyle \liminf_{n\to\infty}} D(n) = 3$, but we have no proof of this.\\
In general $D(N)$ is smaller than the maximum possible value $M(N)= (\sigma_0(N)-1)(\sigma_0(N+1)-1)$. The smallest $N$ for which this happens is $N=8$; one easily sees that $M(8)=6$, while $D(8)=5$ The reason is, that $4|8$ and $3|9$, so $k=4\times 3=12|N(N+1)=72$, but $k=12\geq N=8$, and therefore we cannot find an admissible digit $d$ (since $d=N-k=8-12=-4$, and we must have $d\geq 1$). See also Figure~\ref{fig:D(N)forN=200andN=10000}, where we display $D(N)$ in the left-hand figure for $N=2,\dots,200$, and in the right-hand figure for $N=2,\dots,10.000$.}\hfill $\triangle$
\end{Remarks}

\begin{Theorem}\label{thm:MatchingIn3Steps}
Let $N\geq 2$ be an integer, and let $d, i\in\N$, $i\geq 2$, be such, that $N=\frac{d(d+i)}{i-1}$. Then for any $\alpha\in{X_{N,d,i}}$, one has that $T_{\alpha}^2(\alpha )\in I_k$ and $T_{\alpha}^2(\alpha +1)\in I_{k+1}$ for some $k\in\{ d,\dots,d+i-1\}$. Moreover, $T_{\alpha}^{3}(\alpha)=T_{\alpha}^{3}(\alpha+1)$.
\end{Theorem}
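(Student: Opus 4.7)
The plan is to compute $T_{\alpha}^{2}(\alpha)$ and $T_{\alpha}^{2}(\alpha+1)$ explicitly, use the hypothesis $N(i-1) = d(d+i)$ to get a clean form for both, and then verify the single algebraic identity
\[
\frac{N}{T_{\alpha}^{2}(\alpha+1)} - \frac{N}{T_{\alpha}^{2}(\alpha)} = 1,
\]
which will handle both parts of the statement at once.

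First I would unpack the one-step iterates. Since $\alpha \in X_{N,d,i}$ we have $d(\alpha) = d+i$ and $d(\alpha+1) = d$, so
\[
T_{\alpha}(\alpha) = \tfrac{N}{\alpha} - (d+i), \qquad T_{\alpha}(\alpha+1) = \tfrac{N}{\alpha+1} - d.
\]
By the definition of $X_{N,d,i}$, $T_{\alpha}(\alpha) \in I_d^{o}$ and $T_{\alpha}(\alpha+1) \in I_{d+i}^{o}$, so applying $T_{\alpha}$ again uses digits $d$ and $d+i$ respectively. After clearing denominators one gets, writing $a_{2} := T_{\alpha}^{2}(\alpha)$ and $b_{2} := T_{\alpha}^{2}(\alpha+1)$,
\[
a_{2} \;=\; \frac{\alpha(N + d(d+i)) - dN}{N - \alpha(d+i)}, \qquad b_{2} \;=\; \frac{(\alpha+1)(N + d(d+i)) - (d+i)N}{N - d(\alpha+1)}.
\]

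Next I invoke the hypothesis in the form $N + d(d+i) = Ni$. Substituting into both numerators collapses them to the \emph{same} expression $N(i\alpha - d)$, yielding
\[
a_{2} \;=\; \frac{N(i\alpha - d)}{N - \alpha(d+i)}, \qquad b_{2} \;=\; \frac{N(i\alpha - d)}{N - d(\alpha+1)}.
\]
Taking reciprocals and subtracting gives
\[
\frac{1}{b_{2}} - \frac{1}{a_{2}} \;=\; \frac{(N - d(\alpha+1)) - (N - \alpha(d+i))}{N(i\alpha - d)} \;=\; \frac{i\alpha - d}{N(i\alpha - d)} \;=\; \frac{1}{N},
\]
so $N/b_{2} = N/a_{2} + 1$.

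From this single identity everything follows: if $a_{2} \in I_{k}$, meaning $k + \alpha \le N/a_{2} < k+1+\alpha$, then $k+1+\alpha \le N/b_{2} < k+2+\alpha$, so $b_{2} \in I_{k+1}$; since both digits must lie in $\{d,\dots,d+i\}$, necessarily $k \in \{d,\dots,d+i-1\}$. Finally,
\[
T_{\alpha}(b_{2}) \;=\; \frac{N}{b_{2}} - (k+1) \;=\; \frac{N}{a_{2}} + 1 - (k+1) \;=\; \frac{N}{a_{2}} - k \;=\; T_{\alpha}(a_{2}),
\]
giving $T_{\alpha}^{3}(\alpha) = T_{\alpha}^{3}(\alpha+1)$. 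The only real work is the algebraic simplification in the second paragraph; once the hypothesis on $N$ is substituted, the fact that both $a_{2}$ and $b_{2}$ share the numerator $N(i\alpha-d)$ is what makes the whole argument go through, and I expect this cancellation to be the one step to carry out carefully.
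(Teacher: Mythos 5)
Your proposal is correct and follows essentially the same route as the paper: both compute the second iterates and reduce everything to the key identity $\frac{N}{T_{\alpha}^{2}(\alpha+1)}=\frac{N}{T_{\alpha}^{2}(\alpha)}+1$, from which the digit shift and the matching $T_{\alpha}^{3}(\alpha)=T_{\alpha}^{3}(\alpha+1)$ follow. The only difference is presentational: where the paper verifies the identity via a CAS factorization involving the factor $d^{2}+di-N(i-1)$, you substitute $N+d(d+i)=Ni$ by hand to expose the common numerator $N(i\alpha-d)$, which is a cleaner derivation of the same fact.
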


\begin{proof}
By definition of $X_{N,d,i}$ and $T_{\alpha}$, one has for $\alpha\in X_{N,d,i}$ that $T_{\alpha}^{2}(\alpha)=\frac{N}{\frac{N}{\alpha} - (d +i)} - d$, and that $T_{\alpha}^{2}(\alpha+1)=\frac{N}{\frac{N}{\alpha+1} - d} - (d +i)$.  Then,
\begin{eqnarray*}
\frac{N}{T_{\alpha}^{2}(\alpha)}&=&\frac{N}{\frac{N}{\frac{N}{\alpha} - (d +i)} - d} = -\frac{N(N-(d+i)\alpha)}{Nd - (d^2 + di + N)\alpha},\\
\frac{N}{T_{\alpha}^{2}(\alpha+1)}&=&\frac{N}{\frac{N}{\frac{N}{\alpha + 1} - d} - (d +i)} = -\frac{N(N-d(\alpha + 1))}{(d + i - \alpha -1)N - d(d + i)(\alpha +1))},
\end{eqnarray*}
and using CAS\footnote{CAS is an abbreviation of \emph{computer algebra system}.} one easily finds that:
$$
\frac{N}{T^{2}(\alpha)}-\left( \frac{N}{T^{2}(\alpha+1)}-1\right) = \left( d^2 +di - N(i - 1)\right) \cdot R_{N,d,i,\alpha},
$$
where $R_{N,d,i,\alpha}$ satisfies:
$$
R_{N,d,i,\alpha} = \frac{\big((\alpha^2 + \alpha)d^2 + ((-2\alpha - 1)N + d i\alpha(\alpha + 1)) + (N - \alpha(i - \alpha - 1))N\big)}{(-d^2\alpha + (-i\alpha + N)d - N\alpha)((-\alpha - 1)d^2 + (-i\alpha + N - i)d + N(i - \alpha - 1))}.
$$

Note that if $d^2+di-N(i-1)=0$, so if $N=\frac{d(d+i)}{i-1}$, we have that:
$$
\frac{N}{T_{\alpha}^2(\alpha )} = \frac{N}{T_{\alpha}^2(\alpha +1)} -1.
$$
Since the length of the interval $[\alpha ,\alpha +1)$ is 1, we see that for $N=\frac{d(d+i)}{i-1}$ we have \emph{matching} in 3 steps: $T_{\alpha}^{3}(\alpha)=T_{\alpha}^{3}(\alpha+1)$. Furthermore, $T_{\alpha}^2(\alpha )\in I_k$ and $T_{\alpha}^2(\alpha +1)\in I_{k+1}$ for some $k\in\{ d,\dots,d+i-1\}$.
\end{proof}

\begin{Remarks}\label{rem:RemarkAboutTheMatchingTheorem}{\rm
($i$) For the case $N=2$ the result of Theorem~\ref{thm:MatchingIn3Steps} were already obtained in~Theorem~3.1 of~\cite{KL}.

($ii$) Note that under the conditions of Theorem~\ref{thm:MatchingIn3Steps}, an immediate consequence of the proof of Theorem~\ref{thm:MatchingIn3Steps} is that
$$
X_{N,d,i} = \bigcup_{k=d}^{d+i-1} X_{N,d,i,k}.
$$
($iii$) The conditions of Theorem~\ref{thm:MatchingIn3Steps} which lead to matching in 3 steps were obtained using an extensive search using CAS. We did not find other relations, but that obviously does not imply these do not exist; see for a brief discussion Example~\ref{ex:entropy}.

($iv$) In the definition~(\ref{def:X(N,d,i)}) of $X_{N,d,i}$ we demanded that $T_{\alpha}(\alpha)\in I_d^o$, and that $T_{\alpha}(\alpha+1)\in I_{d+i}^o$; the reason is, that we must avoid the endpoints of the cylinders $I_d$ and $I_{d+i}$ in order to be able to draw the conclusions of Theorem~\ref{thm:MatchingIn3Steps}. For example, if $T_{\alpha}(\alpha)=\alpha +1$ and $T_{\alpha}(\alpha +1) = \alpha$, clearly all branches of $T_{\alpha}$ are full and there will be no matching in 3 steps (or any number of steps), but it is very easy to construct the natural extension; see Theorem 1 in~\cite{dJKN, S} where it is explicitly stated for which $N$ and $\alpha$ one has that $T_{\alpha}$ only has full branches, and also~\cite{S} where the density of $T_{\alpha}$ (and of its dual map) is given. Although not explicitly stated, the driving idea behind these calculations is the concept of \emph{planar natural extension}. Another example is, when $N, d, i$ and $\alpha$ are such, that
$$
T_{\alpha}(\alpha) = \frac{N}{\alpha + 1+d},\quad \text{or $T_{\alpha}(\alpha +1)=\alpha$}.
$$
Here $N/(\alpha + 1 +d)$ is the left endpoint of the cylinder $I_d$ and the right endpoint of the cylinder $I_{d+1}$. Note that by definition of the map $T_{\alpha}$ we have that $N/(\alpha + 1 +d)\in I_{d+1}$. So formally, $N/(\alpha + 1 +d)\not\in I_d$, so certainly $N/(\alpha + 1 +d)\not\in I_d^o$, and therefore $\alpha\not\in X_{N,d,i}$ (so clearly the statement of Theorem~\ref{thm:MatchingIn3Steps} does not apply to this $\alpha$. Indeed we don't have matching in 3 steps: $T_{\alpha}(\alpha ) =  N/(\alpha + 1 +d)$, and therefore $T_{\alpha}^2(\alpha )=\alpha = T_{\alpha}(\alpha +1)$. Still, this is an interesting case, as it is very easy to construct the planar natural extension, and once this has been obtained, to find the ${\mathcal T}_{\alpha}$-invariant measure, and by projecting the invariant measure of the dual algorithm. As the proof of Theorem~\ref{thm:PlanarDomainNaturalExtension} is similar to this construction, we decided to skip it here, but discuss this case briefly in Section~\ref{sec:entropy}. Note however, that for this example the dual algorithm exist, while in general (under the conditions of Theorem~\ref{thm:MatchingIn3Steps}) there is no dual algorithm; see also~\cite{P}, p.~58.}\hfill $\triangle$
\end{Remarks}

Under the assumptions of Theorem~\ref{thm:MatchingIn3Steps} and using the matching in 3 steps guaranteed by Theorem~\ref{thm:MatchingIn3Steps}, we will show how we can find the planar domains $\Omega_{\alpha}$ of the natural extensions of the dynamical systems $(I_{\alpha}, T_{\alpha})$ for $\alpha\in X_{N,d,i}$. Recall the definition of the map ${\mathcal T}_{\alpha}: \Omega_{\alpha}\to \Omega_{\alpha}$ from~(\ref{eq:DefinitionNaturalExtensionMap}):
$$
{\mathcal T}_{\alpha}(x,y) = \left( \frac{N}{x}-d(x), \frac{N}{d(x)+y}\right),
$$
where $x\in I_{\alpha}$, $d(x)\in\{d,d+1,\cdots,d+i\}$ and $N,d,i$ positive integers, with $N, i\geq 2$.

\begin{Theorem}\label{thm:PlanarDomainNaturalExtension}
Let $N\geq 2$ be an integer, and let $d\geq 1$ and $i\geq 2$ be integers, such that $N=\frac{d(d+i)}{i-1}$. Let $\alpha\in X_{N,d,i}$ arbitrary, and let the planar domain $\Omega_{\alpha}$ be the polygon, bounded by the straight line segments between the vertices (in clockwise order) $(\alpha , A)$, $(T_{\alpha}^2(\alpha +1), A)$, $(T_{\alpha}^2(\alpha +1), B)$,
$(T_{\alpha}(\alpha ), B)$, $(T_{\alpha}(\alpha ), C)$, $(\alpha +1, C)$, $(\alpha +1, F)$, $(T_{\alpha}^2(\alpha ),F)$, $(T_{\alpha}^2(\alpha ), E)$, $(T_{\alpha}(\alpha +1),E)$, $(T_{\alpha}(\alpha +1),D)$, $(\alpha ,D)$, and finally `back' to $(\alpha ,A)$ (see also Figures~\ref{fig:OurNaturalExtensions1} and~\ref{fig:OurNaturalExtensions2} where $\Omega_{\alpha}$ is illustrated for various $\alpha$), where $0 < A < B < C < D < E < F$.

Then if the map ${\mathcal T}_{\alpha}: \Omega_{\alpha}\to\Omega_{\alpha}$ is bijective almost surely with respect to Lebesgue measure $\lambda$  we have that
\begin{equation}\label{eq:ValuesOfTheVariusHeights1}
A = E -1 = \frac{-(d+i+1)+\sqrt{(d+i+1)^2 + 4N}}{2},\quad B = F -1 = \frac{-(d+1)+\sqrt{(d-1)^2+4N}}{2},
\end{equation}
and
\begin{equation}\label{eq:ValuesOfTheVariusHeights2}
C = \frac{N(-(d+i-1) + \sqrt{(d+i+1)^2 + 4N})}{2(d+i+N)},\quad D = \frac{N(-(d+1)+\sqrt{(d-1)^2+4N})}{2(N-d)},
\end{equation}
and indeed we have that $0 < A < B < C < D < E < F$.
\end{Theorem}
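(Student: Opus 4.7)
The plan is to use the bijectivity of $\mathcal{T}_{\alpha}$ on $\Omega_{\alpha}$ to pin down the six heights. First I would partition $\Omega_{\alpha}$ vertically by the digit cylinders $I_j$ for $j=d,\dots,d+i$: each cylinder over which the top and bottom of $\Omega_{\alpha}$ are constant yields a single rectangle, while the four cylinders containing $T_{\alpha}(\alpha+1),T_{\alpha}^2(\alpha+1),T_{\alpha}^2(\alpha),T_{\alpha}(\alpha)$ (namely $I_{d+i},I_{k+1},I_k,I_d$, where $k\in\{d,\dots,d+i-1\}$ is supplied by Theorem~\ref{thm:MatchingIn3Steps}) are each split into two sub-rectangles $R^{(j)}_L,R^{(j)}_R$. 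By~(\ref{eq:DefinitionNaturalExtensionMap}), a sub-rectangle $I\times[y_1,y_2]$ with $I\subseteq I_j$ is sent by $\mathcal{T}_{\alpha}$ to $T_{\alpha}(I)\times[N/(j+y_2),N/(j+y_1)]$, and for the intermediate full cylinders $d<j<d+i$ the base of the image is all of $[\alpha,\alpha+1]$.

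Next I would fix a column just to the right of $T_{\alpha}^2(\alpha)$, where $\Omega_{\alpha}$ has $y$-range $[B,F]$, and list the image sub-rectangles covering it ordered from top to bottom: $\mathcal{T}_{\alpha}(R^{(d)}_L)$ with $y$-range $[N/(d+F),N/(d+B)]$, then the images over $I_{d+1},\dots,I_{k-1}$, then the relevant half-image over $I_k$, then over $I_{k+1}$, then the images over $I_{k+2},\dots,I_{d+i-1}$, and finally $\mathcal{T}_{\alpha}(R^{(d+i)}_L)$ with $y$-range $[N/(d+i+D),N/(d+i+A)]$. Bijectivity demands that consecutive images meet vertically without gap, and matching adjacent tops and bottoms forces the telescoping identities $F=B+1$ (from $\mathcal{T}_{\alpha}(R^{(d)}_L)$ meeting the image over $I_{d+1}$) and $E=A+1$ (from the image over $I_{d+i-1}$ meeting $\mathcal{T}_{\alpha}(R^{(d+i)}_L)$ across the split at $I_k,I_{k+1}$). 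The top of the stack being $F$ yields $F=N/(d+B)$, and its bottom being $B$ yields $N/(d+i+D)=B$. A parallel top-analysis at the column $[\alpha,T_{\alpha}(\alpha+1)]$ gives $D=N/(d+F)$, and requiring the two expressions for $D$ to agree reduces by elementary algebra to $N(i-1)=d(d+i)$, which is precisely the hypothesis; so the system is consistent.

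With the telescoping in place, the heights fall out directly. Combining $F=N/(d+B)$ with $F=B+1$ gives $B^2+(d+1)B+d-N=0$, hence the formulas for $B$ and $F$; symmetrically, $A=N/(d+i+E)$, read off as the foot of the analogous stack over $[\alpha,T_{\alpha}(\alpha+1)]$ and supplied by $\mathcal{T}_{\alpha}(R^{(d+i)}_R)$, together with $E=A+1$ gives $A^2+(d+i+1)A-N=0$, hence $A$ and $E$. The identity $D=N/(d+F)$ simplifies via $(B+1)(B+d)=N$ to $D=NB/(N-d)$. Finally, the top of $\Omega_{\alpha}$ jumping from $D$ to $E$ at $T_{\alpha}(\alpha+1)$ is produced by the newly appearing image $\mathcal{T}_{\alpha}(R^{(d)}_R)$ whose $y$-range $[N/(d+F),N/(d+C)]$ must equal $[D,E]$; this yields $E=N/(d+C)$ and, using $E^2+(d+i-1)E=N+d+i$ together with the matching identity, $C=NE/(d+i+N)$. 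The chain $0<A<B<C<D<E<F$ then follows by direct substitution of $N(i-1)=d(d+i)$ and $i\geq 2$ into the explicit formulas. The main technical obstacle I foresee is the careful bookkeeping of which images cover each column, in particular verifying that the telescoping across the split cylinders $I_k,I_{k+1}$ works regardless of the position of $p=T_{\alpha}^3(\alpha)=T_{\alpha}^3(\alpha+1)$ relative to $T_{\alpha}^2(\alpha)$.
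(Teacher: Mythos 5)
Your proposal follows essentially the same route as the paper: decompose $\Omega_{\alpha}$ into the two-dimensional cylinders over the digit intervals, impose that their images under ${\mathcal T}_{\alpha}$ stack without horizontal gaps, read off the resulting system of relations $A=\tfrac{N}{d+i+E}$, $B=\tfrac{N}{d+i+D}$, $C=\tfrac{N}{d+i+A}=\tfrac{N}{d+i-1+E}$, $D=\tfrac{N}{d+1+B}=\tfrac{N}{d+F}$, $E=\tfrac{N}{d+C}$, $F=\tfrac{N}{d+B}$, deduce $E=A+1$ and $F=B+1$, and solve the quadratics; you also correctly locate the role of the hypothesis $N(i-1)=d(d+i)$ as the consistency condition for this overdetermined system, which the paper attributes to the matching of Theorem~\ref{thm:MatchingIn3Steps}. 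The only cosmetic difference is that the paper establishes the ordering $A<B<C<D<E<F$ by contradiction from the relations rather than by direct substitution into the closed forms, but this does not change the substance of the argument.
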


\begin{proof}
As was the case in~\cite{KL} for $N=2$, we need to consider several cases, depending on the value of $k\in\{ d,d+1,\dots,d+i-1\}$ for which $\alpha\in X_{N,d,i,k}$. Since all these case are proved in a similar way, we only consider the case $k=d$ here; the other cases are left to the reader.

If $\alpha \in X_{N,d,i,k}$, we have by definition~(\ref{def:X(N,d,i,k)}) of $X_{N,d,i,k}$ that
$$
T_{\alpha}^2(\alpha ) \in I_d,\quad T_{\alpha}^2(\alpha +1)\in I_{d+1},
$$
(and since $\alpha \in X_{N,d,i}$ we also have (by definition~(\ref{def:X(N,d,i)})) that $T_{\alpha}(\alpha ) \in I_d$ and $T_{\alpha}(\alpha +1)\in I_{d+i}$), and there is matching, as $T_{\alpha}^3(\alpha )=T_{\alpha}^3(\alpha +1)$. We now will show that the polygon $\Omega_{\alpha}$ satisfies the various values of $A$, $B$, et cetera, as mentioned in~(\ref{eq:ValuesOfTheVariusHeights1}) and~(\ref{eq:ValuesOfTheVariusHeights2}).

Define the two-dimensional `cylinders' $\Theta_j$ as:
\begin{equation}\label{def:Thetaj}
\Theta_j := \{ (x,y)\in\Omega_{\alpha}\, |\, x\in\Delta_j\},\quad \text{for $j=d,d+1,\dots,d+i$},
\end{equation}
and recall from~\cite{KL} that we want the images of the various $\Theta_j$ under ${\mathcal T}_{\alpha}$ to `\emph{laminate}'; there should not be horizontal `gaps' between the images, as these will lead to infinitely many of such horizontal `gaps' (see~\cite{KL} for more details).
So we must choose $A$, $B$, et cetera in such a way, that for $j=d,d+1,\dots,d+i$ the polygon ${\mathcal T}_{\alpha}(\Theta_j)$ is mapped ``seamlessly on top'' of the polygon ${\mathcal T}_{\alpha}(\Theta_{j+1})$; see also Figures~\ref{fig:OurNaturalExtensions1} and~\ref{fig:OurNaturalExtensions2}, and also~\cite{KL} for more details. By definition of the second coordinate of the map ${\mathcal T}_{\alpha}$ this occurs when:
\begin{eqnarray}
&& A = \frac{N}{d+i+E},\quad B = \frac{N}{d+i+D},\quad C= \frac{N}{d+i+A},\quad C= \frac{N}{d+i-1+E},\label{eq:RelationsBetweenHeights1} \\
&& D =\frac{N}{d+1+B},\quad D = \frac{N}{d+F},\quad E = \frac{N}{d+C},\quad F = \frac{N}{d+B}.\label{eq:RelationsBetweenHeights2}
\end{eqnarray}
Finally, from Figures~\ref{fig:OurNaturalExtensions1} and~\ref{fig:OurNaturalExtensions2} we see we also need to have that
$$
M = \frac{N}{k+E} = \frac{N}{k+1+A},
$$
which is equivalent with $E = A + 1$; we will see below that this also follows from~(\ref{eq:RelationsBetweenHeights1}). Note that due to the fact there is matching the set ${\mathcal T}_{\alpha}(\Theta_k)$ has a `snug fit' on top of the set ${\mathcal T}_{\alpha}(\Theta_{k+1})$.

From~(\ref{eq:RelationsBetweenHeights1}) resp.~(\ref{eq:RelationsBetweenHeights2}) we see that:
$$
\frac{N}{d+i+A}= \frac{N}{d+i-1+E},\quad \text{resp. }\,\, \frac{N}{d+1+B} = \frac{N}{d+F},
$$
and it follows that $1+A=E$ resp.\ $1+B=F$. But then we immediately have from~(\ref{eq:RelationsBetweenHeights1}) resp.~(\ref{eq:RelationsBetweenHeights2}) that
\begin{equation}\label{eq:RelationsBetweenAandEandbetwwenBandF}
E-1 = \frac{N}{d+i+E},\quad \text{ and that}\,\, F = \frac{N}{d+F-1}.
\end{equation}
From the first equation in~(\ref{eq:RelationsBetweenAandEandbetwwenBandF}) we find a quadratic equation with determinant $(d+i+1)^2 + 4N>0$ and one positive root $E$:
$$
E=\frac{-(d+i-1)+\sqrt{(d+i+1)^2 + 4N}}{2},
$$
hence $A$ is also known (and positive). The second equation in~(\ref{eq:RelationsBetweenAandEandbetwwenBandF}) yields $F$ (and therefore also $B$) in a similar way:
$$
F = \frac{-(d-1)+\sqrt{(d-1)^2+4N}}{2}>0.
$$
Since we know $A$ (and $E$), from~(\ref{eq:RelationsBetweenHeights1}) also $C$ immediately follows. Similarly, $D$ immediately follows from~(\ref{eq:RelationsBetweenHeights2}) since $B$ (and $F$) are known. Due to Lemma~\ref{lemma1} we know that $d<N$, and therefore $(d-1)^2+4N>(d-1)^2+4d= (d+1)^2$, and we see that $D>0$. We still need to show that $A < B < C < D < E < F$. This can be proved by contradiction. Suppose e.g.\ that $A\geq B$. From~(\ref{eq:RelationsBetweenHeights1}) we immediately see that this is equivalent with $D\geq E$. From~(\ref{eq:RelationsBetweenHeights2}) we see that $D\geq E$ is equivalent with $C\geq F$, and from~(\ref{eq:RelationsBetweenHeights1}) and~(\ref{eq:RelationsBetweenHeights2}) it follows that $C\geq F$ is equivalent with $B\geq A+i$. Since $i\geq 2$, this last inequality immediately leads to $B\geq A+i\geq B+i>B$, which is impossible. So we must have that $A<B$ (and immediately that $D<E$). The other inequalities can be obtained in a similar way by contradiction. For example, from~(\ref{eq:RelationsBetweenHeights1}) and~(\ref{eq:RelationsBetweenHeights2}) we see that $B\geq C$ $\Leftrightarrow$ $A\geq D$ $\Leftrightarrow$ $F\geq E+i$. But also we have that $E\geq F$ $\Leftrightarrow$ $B\geq C$, so assuming $B\geq C$ we find that $F\geq E+i$ and that $E\geq F$, and again due to the fact that $i\geq 2$ we find $F\geq E+i \geq F+i>F$, which is impossible. The other two inequalities are left to the reader.

As ${\mathcal T}_{\alpha}: \Theta_j\to {\mathcal T}_{\alpha}(\Theta_j)$ is bijective for $j=d,d+1,\dots,d+i$, it is now follows that ${\mathcal T}_{\alpha}:\Omega_{\alpha}\to\Omega_{\alpha}$ is bijective almost surely.
\end{proof}

\begin{Remarks}\label{rem:TwoExpressionsForB}{\rm
Note that from~(\ref{eq:RelationsBetweenHeights1}) and~(\ref{eq:RelationsBetweenHeights2}) one finds that:
\begin{equation}\label{eq:BasCF}
B = \frac{N}{d+i + \displaystyle{\frac{N}{d+1+B}}},
\end{equation}
from which we find that
\begin{equation}\label{eq:OtherExpressionForB}
B = \frac{-(d+1)(d+i) + \sqrt{(d+1)^2(d+i)^2 + N(d+1)(d+i)}}{2(d+i)}.
\end{equation}
It is not immediately apparent why this last expression~(\ref{eq:OtherExpressionForB}) for $B$ is equal to the one in~(\ref{eq:ValuesOfTheVariusHeights1}). However, it is an easy exercise to see that they are equal if and only if $N=\frac{d(d+i)}{i-1}$. Although expression~(\ref{eq:OtherExpressionForB}) for $B$ is less attractive than the one from~(\ref{eq:ValuesOfTheVariusHeights1}), it comes in handy in Remark~\ref{rem:isomorphisms}($i$). }\hfill $\triangle$
\end{Remarks}

\begin{Example}\label{ex:ExampleN=2d=1i=3}{\rm
In Figures~\ref{Fig:NatExtN=2} and~\ref{fig:OurNaturalExtensions1} the cases $\alpha\in X_{N,d,i,k}$, for $k=1,2,3$, are illustrated when $N=2, d=1, i=3$. Note that in this case we have: $A=\frac{\sqrt{33}-5}{2}=E-1$, $B=\sqrt{2}-1=F-1$, $C=\frac{\sqrt{33}-3}{6}$ and $D=2\sqrt{2}-2$, which were also obtained on p.~121 in~\cite{KL}. Figure~\ref{fig:OurNaturalExtensions2} illustrates the cases $\alpha\in X_{d,i,k}$, for $k=1,2,3$, when $N=3$, $d=1, i=2$, and $d=2,i=7$.}\hfill $\triangle$
\end{Example}

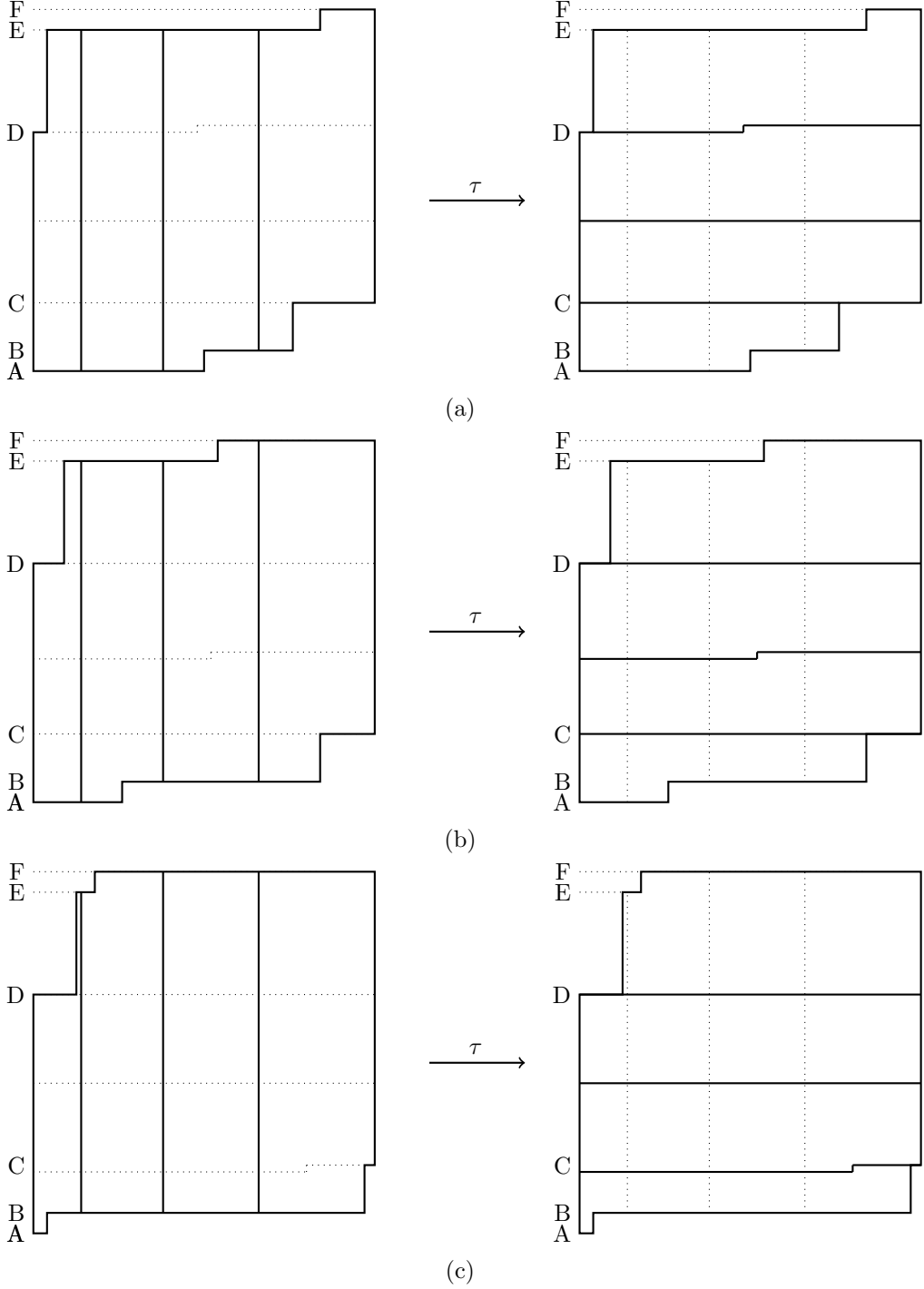
\begin{figure}[htp]
\begin{center}
\begin{tikzpicture}
\draw[black,  thick] (0,0) -- (2.5,0) --(2.5,0.3)-- (3.8,0.3) --(3.8,1)-- (5,1)--(5,5.3)--(4.2,5.3)--(4.2,5)--(0.2,5)--(0.2,3.5)--(0,3.5) -- cycle;
\draw[black, thick] (3.3,0.3) -- (3.3,5);
\draw[black, thick] (1.9,0) -- (1.9,5);
\draw[black, thick] (0.7,0) -- (0.7,5);

\filldraw[black] (0,0)   node[anchor=east]{ A};
\filldraw[black] (0,0.3)   node[anchor=east]{ B};
\filldraw[black] (0,1)   node[anchor=east]{ C};
\filldraw[black] (0,2.2)   node[anchor=east]{ };
\filldraw[black] (0,3.5)   node[anchor=east]{ D};
\filldraw[black] (0,5)   node[anchor=east]{ E};
\filldraw[black] (0,5.3)   node[anchor=east]{ F};

\draw[dotted, black] (0,1) -- (3.8,1);
\draw[dotted, black] (0,2.2) -- (5,2.2);
\draw[dotted, black] (2.4,3.5) -- (2.4,3.6);
\draw[dotted, black] (2.4,3.6) -- (5,3.6);
\draw[dotted, black] (0.2,3.5) -- (2.4,3.5);

\draw[ black,thick, ->] (5.8,2.5)-- (7.2,2.5) ;

\draw[dotted, black] (0,5) -- (0.2,5);
\draw[dotted, black] (0,5.3) -- (4.2,5.3);

\filldraw[black] (6.7,2.7)   node[anchor=east]{$\tau$ };

\draw[black,  thick] (0+8,0) -- (2.5+8,0) --(2.5+8,0.3)-- (3.8+8,0.3) --(3.8+8,1)-- (5+8,1)--(5+8,5.3)--(4.2+8,5.3)--(4.2+8,5)--(0.2+8,5)--(0.2+8,3.5)--(0+8,3.5) -- cycle;
\draw[dotted, black] (3.3+8,0.3) -- (3.3+8,5);
\draw[dotted, black] (1.9+8,0) -- (1.9+8,5);
\draw[dotted, black] (0.7+8,0) -- (0.7+8,5);
\filldraw[black] (0,0)   node[anchor=east]{ A};
\draw[black, thick] (0+8,1) -- (3.8+8,1);
\draw[black, thick] (0+8,2.2) -- (5+8,2.2);
\draw[black, thick] (2.4+8,3.5) -- (2.4+8,3.6);
\draw[black, thick] (2.4+8,3.6) -- (5+8,3.6);
\draw[black, thick] (0.2+8,3.5) -- (2.4+8,3.5);
\filldraw[black] (8+0,0)   node[anchor=east]{ A};
\filldraw[black] (8+0,0.3)   node[anchor=east]{ B};
\filldraw[black] (8+0,1)   node[anchor=east]{ C};
\filldraw[black] (8+0,2.2)   node[anchor=east]{ };
\filldraw[black] (8+0,3.5)   node[anchor=east]{ D};
\filldraw[black] (8+0,5)   node[anchor=east]{ E};
\filldraw[black] (8+0,5.3)   node[anchor=east]{ F};

\draw[dotted, black] (8+0,5) -- (8+0.2,5);
\draw[dotted, black] (8+0,5.3) -- (8+4.2,5.3);

\end{tikzpicture}

(a)

\begin{tikzpicture}
\draw[black,  thick] (0,0) -- (1.3,0) --(1.3,0.3)-- (3.8+0.4,0.3) --(3.8+0.4,1)-- (5,1)--(5,5.3)--(4.2-1.5,5.3)--(4.2-1.5,5)--(0.2+0.25,5)--(0.25+0.2,3.5)--(0,3.5) -- cycle;
\draw[black, thick] (3.3,0.3) -- (3.3,5+0.3);
\draw[black, thick] (1.9,0.3) -- (1.9,5);
\draw[black, thick] (0.7,0) -- (0.7,5);

\filldraw[black] (0,0)   node[anchor=east]{ A};
\filldraw[black] (0,0.3)   node[anchor=east]{ B};
\filldraw[black] (0,1)   node[anchor=east]{ C};
\filldraw[black] (0,2.2)   node[anchor=east]{ };
\filldraw[black] (0,3.5)   node[anchor=east]{ D};
\filldraw[black] (0,5)   node[anchor=east]{ E};
\filldraw[black] (0,5.3)   node[anchor=east]{ F};

\draw[dotted, black] (0,1) -- (5,1);
\draw[dotted, black] (0,2.1) -- (2.6,2.1);
\draw[dotted, black] (2.6,2.1) -- (2.6,2.2);
\draw[dotted, black] (2.6,2.2) -- (5,2.2);
\draw[dotted, black] (0,3.5) -- (5,3.5);

\draw[ black,thick, ->] (5.8,2.5)-- (7.2,2.5) ;

\draw[dotted, black] (0,5) -- (0.2+0.25,5);
\draw[dotted, black] (0,5.3) -- (4.2,5.3);

\filldraw[black] (6.7,2.7)   node[anchor=east]{$\tau$ };

\draw[black,  thick] (0+8,0) -- (1.3+8,0) --(1.3+8,0.3)-- (0.4+3.8+8,0.3) --(0.4+3.8+8,1)-- (5+8,1)--(5+8,5.3)--(4.2+8-1.5,5.3)--(4.2+8-1.5,5)--(0.25+0.2+8,5)--(0.25+0.2+8,3.5)--(0+8,3.5) -- cycle;
\draw[dotted, black] (3.3+8,0.3) -- (3.3+8,5+0.3);
\draw[dotted, black] (1.9+8,0+0.3) -- (1.9+8,5);
\draw[dotted, black] (0.7+8,0) -- (0.7+8,5);
\filldraw[black] (0,0)   node[anchor=east]{ A};
\draw[black, thick] (8+0,1) -- (8+5,1);
\draw[black, thick] (8+0,2.1) -- (8+2.6,2.1);
\draw[black, thick] (8+2.6,2.1) -- (8+2.6,2.2);
\draw[black, thick] (8+2.6,2.2) -- (8+5,2.2);
\draw[black, thick] (8+0,3.5) -- (8+5,3.5);

\filldraw[black] (8+0,0)   node[anchor=east]{ A};
\filldraw[black] (8+0,0.3)   node[anchor=east]{ B};
\filldraw[black] (8+0,1)   node[anchor=east]{ C};
\filldraw[black] (8+0,2.2)   node[anchor=east]{ };
\filldraw[black] (8+0,3.5)   node[anchor=east]{ D};
\filldraw[black] (8+0,5)   node[anchor=east]{ E};
\filldraw[black] (8+0,5.3)   node[anchor=east]{ F};

\draw[dotted, black] (8+0,5) -- (8+0.2+0.25,5);
\draw[dotted, black] (8+0,5.3) -- (8+4.2,5.3);

\end{tikzpicture}

(b)

\begin{tikzpicture}
\draw[black,  thick] (0,0) -- (0.2,0) --(0.2,0.3)-- (4.85,0.3) --(4.85,1)-- (5,1)--(5,5.3)--(0.9,5.3)--(0.9,5)--(0.63,5)--(0.63,3.5)--(0,3.5) -- cycle;
\draw[black, thick] (3.3,0.3) -- (3.3,5.3);
\draw[black, thick] (1.9,0.3) -- (1.9,5.3);
\draw[black, thick] (0.7,0.3) -- (0.7,5);

\filldraw[black] (0,0)   node[anchor=east]{ A};
\filldraw[black] (0,0.3)   node[anchor=east]{ B};
\filldraw[black] (0,1)   node[anchor=east]{ C};
\filldraw[black] (0,2.2)   node[anchor=east]{ };
\filldraw[black] (0,3.5)   node[anchor=east]{ D};
\filldraw[black] (0,5)   node[anchor=east]{ E};
\filldraw[black] (0,5.3)   node[anchor=east]{ F};

\draw[dotted, black] (0,0.9) -- (4,0.9);
\draw[dotted, black] (4,0.9) -- (4,1);
\draw[dotted, black] (4,1) -- (5,1);
\draw[dotted, black] (0,2.2) -- (5,2.2);
\draw[dotted, black] (0,3.5) -- (5,3.5);

\draw[ black,thick, ->] (5.8,2.5)-- (7.2,2.5) ;

\draw[dotted, black] (0,5) -- (0.63,5);
\draw[dotted, black] (0,5.3) -- (4.2,5.3);

\filldraw[black] (6.7,2.7)   node[anchor=east]{$\tau$ };

\draw[black,  thick] (8+0,0) -- (8+0.2,0) --(8+0.2,0.3)-- (8+4.85,0.3) --(8+4.85,1)-- (8+5,1)--(8+5,5.3)--(8+0.9,5.3)--(8+0.9,5)--(8+0.63,5)--(8+0.63,3.5)--(8+0,3.5) -- cycle;
\draw[dotted, black] (3.3+8,0.3) -- (3.3+8,5.3);
\draw[dotted, black] (1.9+8,0.3) -- (1.9+8,5.3);
\draw[dotted, black] (0.7+8,0.3) -- (0.7+8,5);
\filldraw[black] (0,0)   node[anchor=east]{ A};
\draw[black, thick] (8+0,0.9) -- (8+4,0.9);
\draw[black, thick] (8+4,0.9) -- (8+4,1);
\draw[black, thick] (8+4,1) -- (8+5,1);
\draw[black, thick] (8+0,2.2) -- (8+5,2.2);
\draw[black, thick] (8+0,3.5) -- (8+5,3.5);
\filldraw[black] (8+0,0)   node[anchor=east]{ A};
\filldraw[black] (8+0,0.3)   node[anchor=east]{ B};
\filldraw[black] (8+0,1)   node[anchor=east]{ C};
\filldraw[black] (8+0,2.2)   node[anchor=east]{ };
\filldraw[black] (8+0,3.5)   node[anchor=east]{ D};
\filldraw[black] (8+0,5)   node[anchor=east]{ E};
\filldraw[black] (8+0,5.3)   node[anchor=east]{ F};

\draw[dotted, black] (8+0,5) -- (8+0.63,5);
\draw[dotted, black] (8+0,5.3) -- (8+4.2,5.3);

\end{tikzpicture}

(c)
\caption{$\Omega_{\alpha}$ and $T(\Omega_{\alpha})$ with (a): $\alpha\in{X_{N,d,i,1}}$; (b): $\alpha\in{X_{N,d,i,2}}$; (c): $\alpha\in{X_{N,d,i,3}}$, for $N=2, d=1, i=3$.}
\label{fig:OurNaturalExtensions1}
\end{center}
\end{figure}

\begin{Theorem}\label{thm:IntervalOfEqualEntropy}
Let $N\geq 2$ be an integer, and let $d,i\in\N$, $i\geq 2$, such that $N=\frac{d(d+1)}{i-1}$. Then $X_{N,d,i}=(A,B)$, where $A$ and $B$ are from~(\ref{eq:ValuesOfTheVariusHeights1}).
\end{Theorem}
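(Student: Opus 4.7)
The plan is to convert each of the four inequalities defining $X_{N,d,i}$ (the characterization given immediately before Theorem~\ref{thm:MatchingIn3Steps}) into a quadratic inequality in $\alpha$. Under the matching hypothesis $N=d(d+i)/(i-1)$ the four naive thresholds collapse to just two, namely $A$ and $B$ from~(\ref{eq:ValuesOfTheVariusHeights1}); combined with a check that $(A,B)\subseteq\mathcal{A}_{N,d,i}$, this will yield $X_{N,d,i}=(A,B)$.

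For the two ``lower'' conditions, first the inequality $\alpha<N/(\alpha+1)-d$ clears to $\alpha^2+(d+1)\alpha+d-N<0$, and the quantity $B=F-1$ from the proof of Theorem~\ref{thm:PlanarDomainNaturalExtension} is exactly the positive root of this quadratic (substitute $F=B+1$ in $F^2+(d-1)F-N=0$). Second, the inequality $N/(d+1+\alpha)<N/\alpha-(d+i)$ clears to $(d+i)\alpha^2+(d+i)(d+1)\alpha-N(d+1)<0$, whose positive root is the continued-fraction expression~(\ref{eq:OtherExpressionForB}); Remark~\ref{rem:TwoExpressionsForB} already records that this coincides with~(\ref{eq:ValuesOfTheVariusHeights1}) \emph{precisely} when $N=d(d+i)/(i-1)$. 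Hence both lower conditions reduce to $\alpha<B$.

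For the two ``upper'' conditions, $N/\alpha-(d+i)<\alpha+1$ is immediately $\alpha^2+(d+i+1)\alpha-N>0$, whose positive root is $A$. The remaining inequality $N/(\alpha+1)-d<N/(d+i+\alpha)$ clears to $N(d+i-1)<d(\alpha+1)(\alpha+d+i)$; substituting $N(i-1)=d(d+i)$ on the left and expanding the right reduces this to the \emph{same} inequality $\alpha^2+(d+i+1)\alpha-N>0$. So both upper conditions say $\alpha>A$, giving $X_{N,d,i}=(A,B)\cap\mathcal{A}_{N,d,i}$.

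The inclusion $(A,B)\subseteq\mathcal{A}_{N,d,i}$ amounts to verifying $d(\alpha+1)=d$ and $d(\alpha)=d+i$ throughout $(A,B)$: two of the four required floor inequalities are among those just treated, and the remaining two follow by monotonicity from $A<(d+1)/(i-1)$ and $B<d/(i-1)$, which in turn are one-line consequences of the identities $A(A+d+i+1)=d(d+i)/(i-1)$ and $B(B+d+1)=d(d+1)/(i-1)$ obtained by evaluating the relevant increasing quadratic at $(d+1)/(i-1)$ and $d/(i-1)$. The main obstacle is exactly the ``collapse'' of four thresholds to two: the lower-threshold collapse is essentially the content of Remark~\ref{rem:TwoExpressionsForB}, while the upper-threshold collapse is the new algebraic input, and is where the matching identity $N=d(d+i)/(i-1)$ enters in a genuinely nontrivial way. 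Everything else is routine bookkeeping.
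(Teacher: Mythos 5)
Your proof is correct, and while the underlying idea is the same as the paper's --- namely that $A$ and $B$ are exactly the parameter values at which the defining conditions of $X_{N,d,i}$ degenerate into equalities --- your execution is genuinely different and noticeably more complete. The paper reads off the endpoint identities $T_A(A)=A+1$ and $T_A(A+1)=N/(A+d+i)$ (and the analogous pair at $B$) from the lamination relations~(\ref{eq:RelationsBetweenHeights1})--(\ref{eq:RelationsBetweenHeights2}) of Theorem~\ref{thm:PlanarDomainNaturalExtension}, and then concludes with ``since $A<B$, it follows that $(A,B)=X_{N,d,i}$''. You instead work directly with the four inequalities in the explicit description of $X_{N,d,i}$, turn each into a quadratic, and show --- invoking $N=d(d+i)/(i-1)$ in precisely the two places where it is needed --- that the two lower thresholds coincide at $B$ and the two upper thresholds coincide at $A$. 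This collapse of four thresholds to two is the real content of the theorem and is only implicit in the paper's argument, so making it explicit (in particular the reduction of $N/(\alpha+1)-d<N/(d+i+\alpha)$ to $\alpha^2+(d+i+1)\alpha-N>0$, which I verified) is an improvement; it also makes the proof independent of the natural-extension construction. You additionally verify $(A,B)\subseteq\mathcal{A}_{N,d,i}$, i.e.\ that the digit set is genuinely $\{d,\dots,d+i\}$ on the whole interval, via $A<(d+1)/(i-1)$ and $B<d/(i-1)$; since $(A+1)(A+d+1)-N=d+1-(i-1)A$ and $B(B+d+i)-N=(i-1)B-d$, these bounds are exactly what is needed, and the paper's proof omits this point entirely. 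One small remark: the theorem statement contains the typo $N=d(d+1)/(i-1)$; you correctly work with $N=d(d+i)/(i-1)$ throughout, as the paper's own proof also implicitly does.
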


\begin{proof}
In the proof of Theorem~\ref{thm:PlanarDomainNaturalExtension} we saw in~(\ref{eq:RelationsBetweenHeights1}) that $A = \frac{N}{d+i+E}$, and we derived that $E = 1 + A$; these combined yield that:
\begin{equation}\label{eq:A1}
\frac{N}{A}-(d+i)=1+A,\quad \text{i.e.\ $T_{A}(A)=1+A$}.
\end{equation}
Furthermore, in the proof of Theorem~\ref{thm:PlanarDomainNaturalExtension} we saw in~(\ref{eq:RelationsBetweenHeights1}) that $E= \frac{N}{d+C}$, and from~(\ref{eq:RelationsBetweenHeights1}) we derived that $C= \frac{N}{d+i+A}$ and $E= 1+A$. These yield that
\begin{equation}\label{eq:A2}
\frac{N}{A+1}- d = \frac{N}{A+d+i},
\end{equation}
i.e.\ $T_{A}(A+1)$ is equal to the right endpoint of the cylinder $\Delta_{d+i}$. From~(\ref{eq:A1}) and~(\ref{eq:A2}) we see that $A$ is an endpoint of $X_{X,d,i}$, which (by definition~(\ref{def:X(N,d,i)}) of $X_{N,d,i}$) does not belong to $X_{N,d,i}$.

Also we saw in~(\ref{eq:RelationsBetweenHeights1}) that $B= \frac{N}{d+i+D}$, and we obtained that $D = \frac{N}{d+1+B}$, yielding that \begin{equation}\label{eq:B1}
\frac{N}{B+d+1}=\frac{N}{B}-(d+i),\quad \text{i.e.\ $T_{B}(B)$ is the left endpoint of the cylinder $\Delta_d$}.
\end{equation}
In~(\ref{eq:RelationsBetweenHeights2}) we also saw that $F = \frac{N}{d+B}$, and we found that $F = 1+B$.
From these we see that
\begin{equation}\label{eq:B2}
B=\frac{N}{B+1}- d,\quad \text{i.e.\ $T_{B}(B)=1+B$}.
\end{equation}
Now~(\ref{eq:B1}) and~(\ref{eq:B2}) yield that $B$ is an endpoint of $X_{X,d,i}$, which (again by definition~(\ref{def:X(N,d,i)}) of $X_{N,d,i}$) does not belong to $X_{N,d,i}$.

Since $A<B$, it follows that $(A,B) = X_{N,d,i}$.
\end{proof}

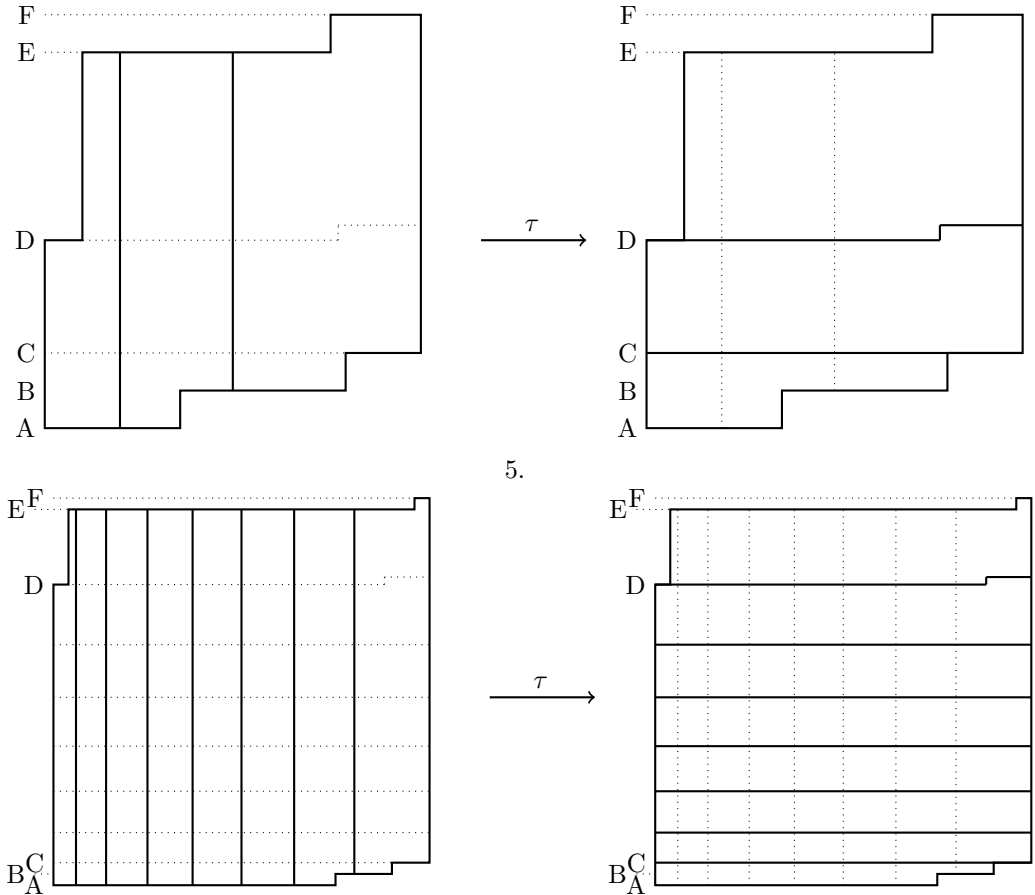
\begin{figure}[htp]
\begin{center}
\begin{tikzpicture}
\draw[black,  thick] (0,0) -- (1.8,0) --(1.8,0.5)-- (4,0.5) --(4,1)-- (5,1)--(5,5.5)--(3.8,5.5)--(3.8,5)--(0.5,5)--(0.5,2.5)--(0,2.5) -- cycle;
\draw[black, thick] (2.5,0.5) -- (2.5,5);
\draw[black, thick] (1,0) -- (1,5);

\filldraw[black] (0,0)   node[anchor=east]{ A};
\filldraw[black] (0,0.5)   node[anchor=east]{ B};
\filldraw[black] (0,1)   node[anchor=east]{ C};
\filldraw[black] (0,2.5)   node[anchor=east]{D };
\filldraw[black] (0,5)   node[anchor=east]{ E};
\filldraw[black] (0,5.5)   node[anchor=east]{ F};

\draw[dotted, black] (0,1) -- (5,1);
\draw[dotted, black] (0,2.5) -- (3.9,2.5);
\draw[dotted, black] (3.9,2.5) -- (3.9,2.7);
\draw[dotted, black] (3.9,2.7) -- (5,2.7);

\draw[ black,thick, ->] (5.8,2.5)-- (7.2,2.5) ;

\draw[dotted, black] (0,5) -- (1,5);
\draw[dotted, black] (0,5.5) -- (5,5.5);

\filldraw[black] (6.7,2.7)   node[anchor=east]{$\tau$ };

\draw[black,  thick] (8+0,0) -- (8+1.8,0) --(8+1.8,0.5)-- (8+4,0.5) --(8+4,1)-- (8+5,1)--(8+5,5.5)--(8+3.8,5.5)--(8+3.8,5)--(8+0.5,5)--(8+0.5,2.5)--(8+0,2.5) -- cycle;
\draw[dotted, black] (8+2.5,0.5) -- (8+2.5,5);
\draw[dotted, black] (8+1,0) -- (8+1,5);

\filldraw[black] (8+0,0)   node[anchor=east]{ A};
\filldraw[black] (8+0,0.5)   node[anchor=east]{ B};
\filldraw[black] (8+0,1)   node[anchor=east]{ C};
\filldraw[black] (8+0,2.5)   node[anchor=east]{D };
\filldraw[black] (8+0,5)   node[anchor=east]{ E};
\filldraw[black] (8+0,5.5)   node[anchor=east]{ F};

\draw[black, thick] (8+0,1) -- (8+5,1);
\draw[ black, thick] (8+0,2.5) -- (8+3.9,2.5);
\draw[black, thick] (8+3.9,2.5) -- (8+3.9,2.7);
\draw[black, thick] (8+3.9,2.7) -- (8+5,2.7);

\draw[dotted, black] (8+0,5) -- (8+1,5);
\draw[dotted, black] (8+0,5.5) -- (8+5,5.5);
\end{tikzpicture}

5.

\begin{tikzpicture}
\draw[black,  thick] (0,0) -- (5-1.25,0) --(5-1.25,0.15)-- (5-0.5,0.15)--(5-0.5,0.3) --(5,0.3)--(5,5.15)--(4.8,5.15)--(4.8,5)--(0.2,5)--(0.2,4)--(0,4) -- cycle;
\draw[black, thick] (4,0.15) -- (4,5);
\draw[black, thick] (3.2,0) -- (3.2,5);
\draw[black, thick] (2.5,0) -- (2.5,5);
\draw[black, thick] (1.85,0) -- (1.85,5);
\draw[black, thick] (1.25,0) -- (1.25,5);
\draw[black, thick] (0.7,0) -- (0.7,5);
\draw[black, thick] (0.3,0) -- (0.3,5);

\filldraw[black] (0,0)   node[anchor=east]{ A};
\filldraw[black] (0,0.3)   node[anchor=east]{ C};
\filldraw[black] (0-0.25,0.15)   node[anchor=east]{B };
\filldraw[black] (0,1.25)   node[anchor=east]{ };
\filldraw[black] (0,1.85)   node[anchor=east]{ };
\filldraw[black] (0,5.15)   node[anchor=east]{F };
\filldraw[black] (0-0.25,5)   node[anchor=east]{E };
\filldraw[black] (0,4)   node[anchor=east]{ D};

\draw[dotted, black] (0,0.3) -- (5,0.3);
\draw[dotted, black] (0,0.7) -- (5,0.7);
\draw[dotted, black] (0,1.25) -- (5,1.25);
\draw[dotted, black] (0,1.85) -- (5,1.85);
\draw[dotted, black] (0,2.5) -- (5,2.5);
\draw[dotted, black] (0,3.2) -- (5,3.2);
\draw[dotted, black] (0,4) -- (4.4,4);
\draw[dotted, black] (4.4,4) -- (4.4,4.1);
\draw[dotted, black] (4.4,4.1) -- (5,4.1);

\draw[ black,thick, ->] (5.8,2.5)-- (7.2,2.5) ;

\draw[dotted, black] (0-0.25,5) -- (0.63,5);
\draw[dotted, black] (0,5.15) -- (5,5.15);

\filldraw[black] (6.7,2.7)   node[anchor=east]{$\tau$ };

\draw[black,  thick] (8+0,0) -- (8+5-1.25,0) --(8+5-1.25,0.15)-- (8+5-0.5,0.15)--(8+5-0.5,0.3) --(8+5,0.3)--(8+5,5.15)--(8+4.8,5.15)--(8+4.8,5)--(8+0.2,5)--(8+0.2,4)--(8+0,4) -- cycle;
\draw[dotted, black] (8+4,0.15) -- (8+4,5);
\draw[dotted, black] (8+3.2,0) -- (8+3.2,5);
\draw[dotted, black] (8+2.5,0) -- (8+2.5,5);
\draw[dotted, black] (8+1.85,0) -- (8+1.85,5);
\draw[dotted, black] (8+1.25,0) -- (8+1.25,5);
\draw[dotted, black] (8+0.7,0) -- (8+0.7,5);
\draw[dotted, black] (8+0.3,0) -- (8+0.3,5);
\draw[dotted, black] (0-0.25,0.15) -- (0,0.15);

\filldraw[black] (8+0,0)   node[anchor=east]{ A};
\filldraw[black] (8+0,0.3)   node[anchor=east]{ C};
\filldraw[black] (8+0-0.25,0.15)   node[anchor=east]{B };
\filldraw[black] (8+0,1.25)   node[anchor=east]{ };
\filldraw[black] (8+0,1.85)   node[anchor=east]{ };
\filldraw[black] (8+0,5.15)   node[anchor=east]{F };
\filldraw[black] (8+0-0.25,5)   node[anchor=east]{E };
\filldraw[black] (8+0,4)   node[anchor=east]{ D};

\draw[black, thick] (8+0,0.3) -- (8+5,0.3);
\draw[black, thick] (8+0,0.7) -- (8+5,0.7);
\draw[black, thick] (8+0,1.25) -- (8+5,1.25);
\draw[black, thick] (8+0,1.85) -- (8+5,1.85);
\draw[black, thick] (8+0,2.5) -- (8+5,2.5);
\draw[black, thick] (8+0,3.2) -- (8+5,3.2);
\draw[black, thick] (8+0,4) -- (8+4.4,4);
\draw[black, thick] (8+4.4,4) -- (8+4.4,4.1);
\draw[black, thick] (8+4.4,4.1) -- (8+5,4.1);

\draw[dotted, black] (8+0-0.25,5) -- (8+0.63,5);
\draw[dotted, black] (8+0,5.15) -- (8+5,5.15);
\draw[dotted, black] (8+0-0.25,0.15) -- (8+0,0.15);
\end{tikzpicture}
\caption{$\Omega_{\alpha}$ and ${\mathcal T}_{\alpha}(\Omega_{\alpha})$ with (a):  $\alpha\in{X_{N,d,i,1}}$, $d=1, i=3$; (b): $\alpha\in{X_{N,d,i,1}}$, $d=2, i=7$ for $N=3$.}
\label{fig:OurNaturalExtensions2}
\end{center}
\end{figure}

\begin{Theorem}\label{thm:NaturalExtension}
Let $N\geq 2$ be an integer, and let $d\geq 1$ and $i\geq 2$ be integers, such that $N=\frac{d(d+i)}{i-1}$. Let $\alpha\in {\overline X}_{N,d,i}$ arbitrary, and let the planar domain $\Omega_{\alpha}$ be the polygon, as given in the statement of Theorem~\ref{thm:PlanarDomainNaturalExtension}, where the values of $A$, $B$, $C$, $D$ and $E$ are also given in Theorem~\ref{thm:PlanarDomainNaturalExtension}.

Consider the probability measure ${\bar \mu}_{\alpha}$ on $\Omega_{\alpha}$, with density $d_{\alpha}$ given by
$$
d_{\alpha}(x,y) = H\cdot \frac{N}{(N+xy)^2}1_{\Omega_{\alpha}}(x,y),
$$
where
\begin{equation}\label{eq:NormalizingConstant}
H^{-1} = 2\log A+2\log(B+1)-\log\big(N-(A+1)d\big)-\log\big(N-(d+i)B\big) ,
\end{equation}
is the normalising constant of $\bar{\mu}_{\alpha}$. Then one easily sees that ${\bar \mu}_{\alpha}$ is ${\mathcal T}_{\alpha}$-invariant. Let $\bar{\mathcal B}_{\alpha}$ be the collection of Borel sets of $\Omega_{\alpha}$. Then the dynamical system $(\Omega_{\alpha}, {\bar{\mathcal B}}_{\alpha}, {\bar \mu}, {\mathcal T}_{\alpha})$ is ergodic. It is also the natural extension of the ergodic system $(I_{\alpha}, {\mathcal B}_{\alpha}, \mu_{\alpha}, T_{\alpha})$, where ${\mathcal B}_{\alpha}$ is the collection of Borel sets of $I_{\alpha}$ and $\mu_{\alpha}$ is the projection of  ${\bar \mu}_{\alpha}$ on the first coordinate (i.e.~on $I_{\alpha}$).

Furthermore, the density $f_{\alpha}(x)$ of the $T_{\alpha}$-invariant measure $\mu_{\alpha}$ is given by
\begin{eqnarray*}\label{eq:onedimensionaldensity}
f_{\alpha}(x) &=& H\Big( \frac{D}{N+Dx}\mathbf{1}_{(\alpha,T(\alpha)+1)}(x)+\frac{E}{N+Ex}\mathbf{1}_{(T(\alpha)+1),T^2(\alpha)}(x)
     +\frac{F}{N+Fx}\mathbf{1}_{(T^2(\alpha),\alpha+1)}(x) \\
     && - \frac{A}{N+Ax}\mathbf{1}_{(\alpha,T^2(\alpha)+1)}(x) - \frac{B}{N+Bx}\mathbf{1}_{(T^2(\alpha)+1),T(\alpha))}(x)
 -\frac{C}{N+Cx}\mathbf{1}_{(T(\alpha),\alpha+1)}(x)\Big) .
\end{eqnarray*}
where $H$ is given by~(\ref{eq:NormalizingConstant}).
\end{Theorem}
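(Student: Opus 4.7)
On each two-dimensional cylinder $\Theta_j=\{(x,y)\in\Omega_\alpha:d(x)=j\}$, the map $\mathcal{T}_\alpha:(x,y)\mapsto(N/x-j,\,N/(j+y))$ is a $C^1$ diffeomorphism onto its image with Jacobian modulus $N^2/[x^2(j+y)^2]$. Writing $(u,v)=\mathcal{T}_\alpha(x,y)$, so that $x=N/(u+j)$ and $y=N/v-j$, one computes $N+xy=N(N+uv)/[v(u+j)]$; substituting this into $d_\alpha(x,y)$ and dividing by $|\mathrm{Jac}\,\mathcal{T}_\alpha(x,y)|$ yields exactly $d_\alpha(u,v)$. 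Thus $d_\alpha$ is preserved cylinder by cylinder, and the almost-sure bijectivity of $\mathcal{T}_\alpha:\Omega_\alpha\to\Omega_\alpha$ established in Theorem~\ref{thm:PlanarDomainNaturalExtension} lifts this to $\mathcal{T}_\alpha$-invariance of $\bar\mu_\alpha$ on $\Omega_\alpha$.

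\textbf{Marginal density and normalising constant.} Using the antiderivative $\int N(N+xy)^{-2}\,dy=y/(N+xy)$, I integrate $d_\alpha(x,\cdot)$ over the vertical slice of $\Omega_\alpha$ at $x$. From the polygon description, the upper boundary takes the step values $D,E,F$ on $(\alpha,T_\alpha(\alpha+1))$, $(T_\alpha(\alpha+1),T_\alpha^2(\alpha))$, $(T_\alpha^2(\alpha),\alpha+1)$, and the lower boundary takes $A,B,C$ on $(\alpha,T_\alpha^2(\alpha+1))$, $(T_\alpha^2(\alpha+1),T_\alpha(\alpha))$, $(T_\alpha(\alpha),\alpha+1)$; plugging these into $\mathrm{top}(x)/(N+x\cdot\mathrm{top}(x))-\mathrm{bot}(x)/(N+x\cdot\mathrm{bot}(x))$ and multiplying by $H$ produces exactly the stated formula for $f_\alpha(x)$. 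Integrating $f_\alpha/H$ over $[\alpha,\alpha+1]$ yields twelve $\log$-terms of the form $\log(N+\cdot\,x)$ evaluated at the six break points; using the identities $E=A+1$, $F=B+1$, $(d+i+E)A=(d+i+D)B=N$ and $(d+1+B)D=(d+B)F=E(d+C)=N$ from Theorem~\ref{thm:PlanarDomainNaturalExtension}, the endpoint identities $T_\alpha(\alpha+1)=N/(\alpha+1)-d$ and $T_\alpha(\alpha)=N/\alpha-(d+i)$, the matching-in-three-steps consequence $T_\alpha^3(\alpha)=T_\alpha^3(\alpha+1)$, and the matching condition $N(i-1)=d(d+i)$, the twelve $\log$-terms collapse to the compact expression~(\ref{eq:NormalizingConstant}) for $H^{-1}$.

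\textbf{Natural extension and ergodicity.} The projection $\pi:(x,y)\mapsto x$ satisfies $\pi\circ\mathcal{T}_\alpha=T_\alpha\circ\pi$, and $\pi_*\bar\mu_\alpha$ is an absolutely continuous $T_\alpha$-invariant probability measure, which by uniqueness in Theorem~\ref{ergodic} must equal $\mu_\alpha$. To verify that $(\Omega_\alpha,\mathcal{T}_\alpha,\bar\mu_\alpha)$ is the natural extension of $(I_\alpha,T_\alpha,\mu_\alpha)$, I show that $\bigvee_{n\geq 0}\mathcal{T}_\alpha^n\pi^{-1}(\mathcal{B}_\alpha)=\bar{\mathcal{B}}_\alpha$ mod $\bar\mu_\alpha$. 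Each forward image $\mathcal{T}_\alpha^n(\pi^{-1}(J))$ of a vertical strip is a finite union of horizontal strips whose vertical extent is controlled by the $y$-derivative of the $n$-fold composition $y\mapsto N/(d_n+N/(d_{n-1}+\cdots+N/(d_1+y)\cdots))$; already for $n=2$ this derivative equals $N^2/[d_2(d_1+y)+N]^2<1$, and further composition drives it to $0$ as $n\to\infty$. The resulting arbitrarily thin horizontal strips, together with the vertical strips $\pi^{-1}(\mathcal{B}_\alpha)$, generate $\bar{\mathcal{B}}_\alpha$. Ergodicity of $\mathcal{T}_\alpha$ on $\Omega_\alpha$ then follows from ergodicity of its factor $(I_\alpha,T_\alpha,\mu_\alpha)$ (Theorem~\ref{ergodic}) by the standard principle that the natural extension of an ergodic system is ergodic.

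\textbf{Main obstacle.} The bookkeeping in the normalising-constant computation is the most delicate step: the twelve $\log$-terms from integrating $f_\alpha$ cancel in a non-obvious way, requiring coordinated use of all six relations among $A,\dots,F$, the matching condition $N(i-1)=d(d+i)$, and the matching-in-three-steps identity; the cancellation is algebraically routine but requires careful tracking to reach the stated closed form~(\ref{eq:NormalizingConstant}).
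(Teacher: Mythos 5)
Your proposal is correct and follows essentially the same route as the paper: the same Jacobian change of variables $u=N/x-j$, $v=N/(j+y)$ on each cylinder $\Theta_j$ for invariance, the same fiber integration $\int N(N+xy)^{-2}\,dy = y/(N+xy)$ against the step-function boundaries of $\Omega_\alpha$ for the marginal $f_\alpha$, the same reduction of $H^{-1}$ to logarithmic terms that are shown constant via the relations among $A,\dots,F$ and the matching condition, and ergodicity of $({\Omega}_\alpha,\mathcal{T}_\alpha)$ deduced from ergodicity of the factor. The only real divergence is that you supply an explicit generation argument for the natural-extension property (contraction of the $y$-fibers under two-fold composition, so that $\bigvee_{n\ge 0}\mathcal{T}_\alpha^n\pi^{-1}(\mathcal{B}_\alpha)$ yields arbitrarily thin horizontal strips), whereas the paper outsources this step to~\cite{DKW} and Definition~5.3.1 of~\cite{DKa}; conversely, the paper devotes most of its proof to carrying out in full the term-by-term cancellation in $H^{-1}$ that you correctly identify as the delicate step but only sketch.
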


\begin{proof}
The proof of this Theorem is nowadays largely routine; that ${\bar \mu}$ is a ${\mathcal T}_{\alpha}$-invariant probability measure is a Jacobian calculation (c.f.~p.~3189 of~\cite{DKW} and pp.~90 and 136 of~\cite{DKa}): for $j\in\{ d,\dots,d+i\}$, let $(x,y)\in\Theta_j$, where $\Theta_j$ is defined as in~(\ref{def:Thetaj}). We already saw that $\Theta_k\cup \Theta_{\ell}=\emptyset$ as $k,\ell\in\{d,\dots,d+i\}$ and $k\neq \ell$. Furthermore, apart from a set of measure zero we have that $\Omega_{\alpha} = \bigcup_{j=d}^{d+i}\Theta_j = \bigcup_{j=d}^{d+i}{\mathcal T}_{\alpha}(\Theta_j)$. Now let $R\subset \Theta_j$, for some $j\in\{ d,\dots,d+i\}$, and set $S={\mathcal T}_{\alpha}(R)$. Setting
$$
u = \frac{N}{x} - j,\,\, v = \frac{N}{j+y},\quad \text{we have that }\,\, x = \frac{N}{u+j},\,\, y = \frac{N}{v} - j,
$$
from which we find that the Jacobian of ${\mathcal T}_{\alpha}$ is given by:
$$
\left| \frac{\partial (x,y)}{\partial (u,v)}\right| = \frac{N^2}{v^2(n+j)^2}.
$$
So we see that $\bar{\mu}$ is ${\mathcal T}_{\alpha}$-invariant, as
$$
\iint_R \frac{N}{(N+xy)^2}\, {\rm d}x{\rm d}y = \iint_S \frac{N}{(N+x(u,v)y(u,v))^2} \left| \frac{\partial (x,y)}{\partial (u,v)}\right| \, {\rm d}u{\rm d}v = \iint_S \frac{N}{(N+uv)^2}\, {\rm d}u{\rm d}v .
$$
In Theorem~\ref{thm:PlanarDomainNaturalExtension} we obtained that ${\mathcal T}_{\alpha}\to\Omega_{\alpha}$ is almost surely bijective, and in Theorem~\ref{ergodic} we already saw that $(I_{\alpha}, {\mathcal B}_{\alpha}, \mu_{\alpha}, T_{\alpha})$ is an ergodic system. That $(\Omega_{\alpha}, {\bar{\mathcal B}}_{\alpha}, {\bar \mu}, {\mathcal T}_{\alpha})$ is the natural extension of $(I_{\alpha}, {\mathcal B}_{\alpha}, \mu_{\alpha}, T_{\alpha})$ can be seen by adapting the proof from~\cite{DKW} or from applying Definition~5.3.1 from~\cite{DKa}. Due to Theorem~\ref{ergodic} and Theorem~5.3.1($iii$) from~\cite{DKa} we now have that the natural extension $(\Omega_{\alpha},\bar{\mathcal B}_{\alpha}, \bar{\mu}_{\alpha}, {\mathcal T}_{\alpha})$ is ergodic. In fact, stronger mixing properties hold, but we do not investigate these here.

Perhaps the most surprising fact is, is that the normalising constant $H$ is \emph{constant} for $\alpha\in {\overline X}_{N,d,i}$. One way to see this is by brute force calculations, as we will do in the rest of this proof. However, in Section~\ref{sec:quilting} we will see that for any two $\alpha,\alpha'\in X_{N,d,i}$ we have that the planar natural extensions of the underlying dynamical systems are metrically isomorphic. This will not only yield that the normalizing constant $H$ is constant for $\alpha\in X_{N,d,i}$, but also that the entropy for all these dynamical systems is equal.

To obtain the normalising constant, we project the density ${\bar \mu}$ of the planar natural extension $\Omega_{\alpha}$ on the first coordinate by integrating out the second coordinate:
\begin{eqnarray*}
H^{-1} &=&\int^{\alpha+1}_{\alpha}\frac{D}{N+Dx}\mathbf{1}_{(\alpha,T(\alpha+1))}
     +\int^{\alpha+1}_{\alpha}\frac{E}{N+Ex}\mathbf{1}_{(T(\alpha+1)),T^2(\alpha))}
     +\int^{\alpha+1}_{\alpha}\frac{F}{N+Fx}\mathbf{1}_{(T^2(\alpha),\alpha+1)}\\
     &&-\int^{\alpha+1}_{\alpha}\frac{A}{N+Ax}\mathbf{1}_{(\alpha,T^2(\alpha+1))}
     -\int^{\alpha+1}_{\alpha}\frac{B}{N+Bx}\mathbf{1}_{(T^2(\alpha+1)),T(\alpha))}
 -\int^{\alpha+1}_{\alpha}\frac{C}{N+Cx}\mathbf{1}_{(T(\alpha),\alpha+1)} \\
 &=&\log  \big(\frac{N+DT(\alpha+1)}{N+D\alpha}\big)+\log  \big(\frac{N+ET^2(\alpha)}{N+ET(\alpha+1)}\big)+\log  \big(\frac{N+F(\alpha+1)}{N+FT^2(\alpha)}\big) \\
  &&-\log  \big(\frac{N+AT^2(\alpha+1))}{N+A\alpha}\big)-\log  \big(\frac{2+BT(\alpha)}{N+BT^2(\alpha+1))}\big)+\log  \big(\frac{N+C(\alpha+1)}{N+C(T(\alpha))}\big) \\
&=&\log  \big(\frac{N+D\frac{-d\alpha+N-d}{\alpha+1}}{N+D\alpha}\big)+\log  \big(\frac{N+E\frac{(d^2 + di + N)\alpha -Nd }{-(d+i)\alpha + N}}{N+E\frac{-d\alpha+N-d}{\alpha+1}}\big)+\log  \big(\frac{N+F(\alpha+1)}{N+F\frac{(d^2 + di + N)\alpha -Nd }{-(d+i)\alpha + N}}\big)\\
  &&-\log  \big(\frac{N+A\frac{(d^2 + di + N)\alpha + d^2 + (-N + i)d - N(i - 1)}{-d\alpha + N - d}}{N+A\alpha}\big)-\log  \big(\frac{N+B\frac{-(d+i)\alpha+N}{\alpha}}{N+B\frac{(d^2 + di + N)\alpha + d^2 + (-N + i)d - N(i - 1)}{-d\alpha + N - d}}\big)\\
  &&-\log  \big(\frac{N+C(\alpha+1)}{N+C\frac{-(d+i)\alpha+N}{\alpha}}\big) \\
\phantom{\hat{H}} &=&\log  \big(\frac{(N-Dd)\alpha+N+D(N-d)}{N+D\alpha}\big)-\log  (\alpha+1)\nonumber\\
  &~&+\log  \big(\frac{\big((E-d-i)N+Ed(d+i)\big)\alpha + N^2- ENd}{(N-Ed)\alpha + E(N-d)+N}\big)+\log  (\alpha+1)-\log  (N-(d + i)\alpha)\\
  &~&+\log  \big(\frac{F\alpha + N+F}{\big(N(F - d - i) + F(d^2 + di)\big)\alpha - NFd + N^2}\big)+\log  (N-(d + i)\alpha) \\
  &~&-\log  \big(\frac{\big(N(-d + A) + Ad(d + i)\big)\alpha + N^2 -\big((A+1)d +A(i - 1)\big)N + Ad(d + i)}{A\alpha+N}\big)+\log  (N-d-d\alpha)\nonumber\\
    &~&-\log \big(\frac{\big(B(d + i)-N\big)\alpha -NB}{(Nd-(N + d(d + i))B)\alpha + ((d + i - 1)N - d(d + i))B - (N-d )N}\big)-\log (N-d-d\alpha)+\log  \alpha \\
  &~&-\log \big(\frac{C\alpha + C + N}{\big(N - (d + i)C\big)\alpha + NC}\big)-\log \alpha \\
 \end{eqnarray*}

Thus, one has
\begin{eqnarray*}
H^{-1} &=& \log  \big(\frac{(N-Dd)\alpha+N+D(N-d)}{N+D\alpha}\big) \\
  &~&+\log  \big(\frac{\big((E-d-i)N+Ed(d+i)\big)\alpha + N^2- ENd}{(N-Ed)\alpha + E(N-d)+N}\big) \\
  &~&+\log  \big(\frac{F\alpha + N+F}{\big(N(F - d - i) + F(d^2 + di)\big)\alpha- NFd + N^2}\big) \\
  &~&-\log  \big(\frac{\big(N(-d + A) + Ad(d + i)\big)\alpha + N^2 -\big((A+1)d +A(i - 1)\big)N + Ad(d + i)}{A\alpha+N}\big) \\
  &~&-\log \big(\frac{\big(B(d + i)-N\big)\alpha -NB}{(Nd-(N + d(d + i))B)\alpha + ((d + i - 1)N - d(d + i))B - (N-d )N}\big) \\
  &~&-\log \big(\frac{C\alpha + C + N}{\big(N - (d + i)C\big)\alpha + NC}\big).
\end{eqnarray*}

We will show now that the first and second terms are constants; the other terms have similar proofs of being constant.

($i$): If we can show that $\frac{N-Dd}{D}=\frac{N+D(N-d)}{N}$, then $\log\big(\frac{(N-Dd)\alpha+N+D(N-d)}{N+D\alpha}\big)$ is a constant.

To see why this last statement holds, assume that $\frac{N-Dd}{D}=\frac{N+D(N-d)}{N}$. Now,
$$
\frac{(N-Dd)\alpha+N+D(N-d)}{N+D\alpha} = \frac{D\cdot \frac{(N-D\alpha )}{D}\alpha + N\cdot \frac{N+D(N-D)}{N}}{N+D\alpha} = \frac{D\alpha +N}{N+D\alpha}\cdot \frac{N-Dd}{D},
$$
and for $\alpha\in X_{N,d,i}$ one has that $\frac{N-Dd}{D}$ is a constant. By substituting $D=\frac{N}{d+1+B}$ (c.f.~(\ref{eq:RelationsBetweenHeights2})), the following statements are equivalent:
\begin{eqnarray*}
& & \frac{N-Dd}{D} = \frac{N+D(N-d)}{N}\quad \Longleftrightarrow \quad  \frac{N}{D}-d = 1+D-\frac{Dd}{N} \\
& \Longleftrightarrow & \frac{N}{\frac{N}{d+1+B}}-d = 1+\frac{N}{d+1+B}-\frac{\frac{N}{d+1+B}d}{N} \quad \Longleftrightarrow  \quad
B = \frac{N-d}{d+1+B}.
\end{eqnarray*}
Meanwhile, from~(\ref{eq:ValuesOfTheVariusHeights1}) we know that $B=\frac{N}{B+1}-d$, so $(B+d)(B+1)=N$, and from this we see that  $B(d+1+B)=N-d$; i.e.\ we find that $B=\frac{N-d}{d+1+B}$. Therefore, $\frac{N-Dd}{D}=\frac{N+D(N-d)}{N}$, and $\log\big(\frac{(N-Dd)\alpha+N+D(N-d)}{N+D\alpha}\big)$ is a constant. Note that $\frac{N-Dd}{D}=\frac{N}{D}-d=1+B$.

($ii$): The proof that the second term is constant is similar to case ($i$). Here we show that \emph{if} $\frac{(E-d-i)N+Ed(d+i)}{N-Ed}=\frac{ N^2- ENd}{E(N-d)+N}$, \emph{then} $\log\big(\frac{((E-d-i)N+Ed(d+i))\alpha + N^2- ENd}{(N-Ed)\alpha + (E+1)N- Ed}\big)$ is a constant. To see this last statement, note that \emph{if} we assume that $\frac{(E-d-i)N+Ed(d+i)}{N-Ed}=\frac{ N^2- ENd}{E(N-d)+N}$, we have that
$\frac{((E-d-i)N+Ed(d+i))\alpha + N^2- ENd}{(N-Ed)\alpha + (E+1)N- Ed}$ is equal to
\begin{eqnarray*}
&& \frac{(N-Ed)\cdot \frac{(E-d-i)N+Ed(d+i))}{N-Ed}\cdot \alpha + \frac{N^2- ENd}{(E(N-d)+N}\cdot (E(N-d)+N)}{(N-Ed)\alpha + (E+1)N- Ed} \\
&=& \frac{(N-Ed)\alpha + E(N-d) + N}{(N-ED)\alpha + (E+1)N-Ed}\cdot \frac{N^2-ENd}{E(N-d) +N},
\end{eqnarray*}
and for $\alpha\in X_{N,d,i}$ we have that $\frac{N^2-ENd}{E(N-d) +N}$ is a constant. In order to see that we indeed have $\frac{(E-d-i)N+Ed(d+i)}{N-Ed}=\frac{ N^2- ENd}{E(N-d)+N}$, note that from~(\ref{eq:A2}) we know that $\frac{N}{A+1}-d=\frac{N}{A+d+i}$, and therefore that we have that $d(A+1)(d+i)=N(d+i)-N-dA(A+1)$.

From this last expression, and since $E=A+1$,
 \begin{eqnarray*}
\frac{(E-d-i)N+Ed(d+i)}{N-Ed}&=&\frac{(1+A-d-i)N+d(1+A)(d+i)}{N-(1+A)d} \\
&=& \frac{(1+A-d-i)N+N(d+i)-N-dA(A+1)}{N-(A+1)d} \\
&=& \frac{NA-dA(A+1)}{N-(A+1)d}) \, =\, A
\end{eqnarray*}

Now from~(\ref{eq:A1}) it immediately follows that $\frac{N}{d+i+A+1}=A$, and once more using that $\frac{N}{A+1}-d=\frac{N}{A+d+i}$, we find that:
$$
\frac{N^2- ENd}{E(N-d)+N} = \frac{N(\frac{N}{E}-d)}{N-d+\frac{N}{E}} = \frac{N(\frac{N}{A+1}-d)}{N+\frac{N}{A+1}-d} = N\cdot\frac{\frac{N}{A+d+i}}{N+\frac{N}{A+d+i}} = \frac{N}{d+i+A+1} = A.
$$
Therefore, $\frac{(E-d-i)N+Ed(d+i)}{N-Ed}=\frac{ N^2- ENd}{E(N-d)+N}$; the second term $\log\big(\frac{((E-d-i)N+Ed(d+i))\alpha + N^2- ENd}{(N-Ed)\alpha + (E+1)N- Ed}\big)$ is a constant.

Similarly, one can show that the other terms in $H^{-1}$ are also constant. In doing so one finds that:
$$
H^{-1} = \log(1+B)+\log A + \log\left( \frac{1}{N-(d+i)B}\right) - \log\big(N-(A+1)d\big) - \log\left( \frac{1}{B+1}\right) - \log\left( \frac{1}{A}\right) ,
$$
which can be simplified into ~(\ref{eq:NormalizingConstant}):
$$
H^{-1} = 2\log A+2\log(B+1)-\log\big(N-(A+1)d\big)-\log\big(N-(d+i)B\big) .
$$
\end{proof}

\section{Quilting}\label{sec:quilting}
In this section we will use a technique called \emph{quilting}, which was first used in~\cite{KSSm}, and which was originally based on the use of `insertions' and `singularizations' (which are operations on the partial quotients of the regular continued fraction expansion of any real number $x$) as investigated in~\cite{DK,dJKN,K}. In these last three paper quilting was steered by the insertion and singularization operations, while quilting was `blind' in~\cite{KL,KSSm}, in the sense that the quilting maps were `guessed'. For $N$-expansions with finitely many digits quilting was already used in~\cite{KL} for the case $N=2$, in order to show the occurrence of an `entropy plateau'; i.e.\ a set of values of $\alpha$ for which the dynamical systems $(I_{\alpha},{\mathcal B}_{\alpha}, \mu_{\alpha}, T_{\alpha})$ all have the same entropy. In this section we will see that the approach from~\cite{KL} can be generalized to the intervals $(A,B)$ we obtained in Theorem~\ref{thm:IntervalOfEqualEntropy}, where  $A$ and $B$ are given in~(\ref{eq:ValuesOfTheVariusHeights1}). Recently, quilting has received a thorough theoretic founding in~\cite{CKS}, but here we follow the more `hands-on' approach from~\cite{KL,KSSm}.

Let $N\geq 2$ be an integer, and let $d\geq 1$ and $i\geq 2$ be integers, such that $N=\frac{d(d+i)}{i-1}$. Let $\alpha, \beta\in X_{N,d,i,k}$, $\alpha < \beta$ arbitrary, with $k\in \{ d,\dots,d+i-1\}$, and let the planar domains $\Omega_{\alpha}$ and $\Omega_{\beta}$ be the polygon, as given in the statement of Theorem~\ref{thm:PlanarDomainNaturalExtension}, where the values of $A$, $B$, $C$, $D$ and $E$ are also given in Theorem~\ref{thm:PlanarDomainNaturalExtension}. Since $\alpha < \beta$ both maps $T_{\alpha}:I_{\alpha}\to I_{\alpha}^{-}$ and $T_{\beta}:I_{\beta}\to I_{\beta}^{-}$ have cylinders for the digits $d,\dots,d+i$, but that these cylinder are no the same (but overlapping) for the same digit $k$ (with $k\in\{ d,\dots,d+i\}$). For this reason we define the cylinders of $T_{\alpha}$ by $I_k(\alpha)$, and similarly those for the map $T_{\beta}$ by $I_k(\beta)$. We first assume that $T_{\alpha}^{2}(\alpha)\in I_k^o(\alpha), T^2_{\alpha}(\alpha+1)\in I_{k+1}^o(\alpha)$, and similarly $T_{\beta}^{2}(\beta)\in I_k^o(\beta), T^2_{\beta}(\beta+1)\in I_{k+1}^o(\beta)$; c.f.~(\ref{def:X(N,d,i,k)}).

As in~\cite{KL}, we define sets $A_0$, $A_1={\mathcal T}_{\alpha}(A_0)$ and $A_2={\mathcal T}_{\alpha}(A_1)$, and sets $D_0$, $D_1={\mathcal T}_{\beta}(D_0)$ and $D_2={\mathcal T}_{\beta}(D_1)$, where:
\begin{equation}\label{def:definitionsOfA1AndD1}
A_0=[\alpha,\beta]\times [A,D],\quad D_0=[\alpha +1,\beta+1]\times [C,F].
\end{equation}
We have the following result.
\begin{Proposition}\label{prop:LemmaAboutAandD}
Let $N\geq 2$ be an integer, and let $d\geq 1$ and $i\geq 2$ be integers, such that $N=\frac{d(d+i)}{i-1}$. Let $\alpha, \beta\in X_{N,d,i,k}$, $\alpha < \beta$ arbitrary, with $k\in \{ d,\dots,d+i-1\}$. Moreover, let $T_{\alpha}^{2}(\alpha)\in I_k^o(\alpha), T^2_{\alpha}(\alpha+1)\in I_{k+1}^o(\alpha)$, and similarly $T_{\beta}^{2}(\beta)\in I_k^o(\beta), T^2_{\beta}(\beta+1)\in I_{k+1}^o(\beta)$.

Then we have that:
\begin{equation}\label{eq:QuiltingWorks}
{\mathcal T}_{\alpha}(A_2) = {\mathcal T}_{\beta}(D_2),
\end{equation}
and the map ${\mathcal M}_{\beta , \alpha}: \Omega_{\beta}\to \Omega_{\alpha}$, defined by:
\begin{equation}\label{eq:MetricIsomorphism}
{\mathcal M}_{\beta ,\alpha}(x,y) = \begin{cases}
(x,y), & \text{if $(x,y)\in \Omega_{\beta}\setminus (D_0\cup D_1\cup D_2)$}; \\
{\mathcal T}_{\alpha}^{-3}\Big( {\mathcal T}_{\beta}^3(x,y)\Big), & \text{if $(x,y)\in D_0$}; \\
{\mathcal T}_{\alpha}^{-2}\Big( {\mathcal T}_{\beta}^2(x,y)\Big), & \text{if $(x,y)\in D_1$}; \\
{\mathcal T}_{\alpha}^{-1}\Big( {\mathcal T}_{\beta}(x,y)\Big), & \text{if $(x,y)\in D_2$}.
\end{cases}
\end{equation}
is a metric isomorphism from $\Omega_{\beta}$ to $\Omega_{\alpha}$; cf.~Figure~\ref{fig:OmegaAlphaAndBeta}.
\end{Proposition}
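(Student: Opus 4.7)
The plan is to reduce the proposition to two main ingredients: first, identifying the rectangles $A_0, A_1, A_2$ (respectively $D_0, D_1, D_2$) as the symmetric differences $\Omega_\alpha\setminus\Omega_\beta$ (respectively $\Omega_\beta\setminus\Omega_\alpha$) up to measure zero; and second, verifying equation~(\ref{eq:QuiltingWorks}) through the $3$-step matching guaranteed by Theorem~\ref{thm:MatchingIn3Steps}.

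I would begin by computing the sets $A_j, D_j$ explicitly. Since $\alpha,\beta\in X_{N,d,i,k}$ with $\alpha<\beta$ sufficiently close, the interval $[\alpha,\beta]$ lies in the cylinder $I_{d+i}(\alpha)$ of $T_\alpha$, so $T_\alpha$ acts there as $x\mapsto N/x-(d+i)$, while the second coordinate transforms as $y\mapsto N/(d+i+y)$. Using the relations $N/(d+i+D)=B$ and $N/(d+i+A)=C$ from~(\ref{eq:RelationsBetweenHeights1}) one obtains $A_1=[T_\alpha(\beta),T_\alpha(\alpha)]\times[B,C]$. The iterates $A_2$ and ${\mathcal T}_\alpha(A_2)$ are computed analogously, using that the digit sequence followed by the $A$-chain is $(d+i,d,k)$ (since $T_\alpha(\alpha)\in I_d^o$ and $T_\alpha^2(\alpha)\in I_k^o$ by the assumption $\alpha\in X_{N,d,i,k}$). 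The parallel computation for the $D$-chain uses the digit sequence $(d,d+i,k+1)$ inherited from the orbit of $\beta+1$ under $T_\beta$.

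The crux is the matching equation~(\ref{eq:QuiltingWorks}). Theorem~\ref{thm:MatchingIn3Steps} gives $T_\alpha^3(\alpha)=T_\alpha^3(\alpha+1)$ and $T_\beta^3(\beta)=T_\beta^3(\beta+1)$, so the $x$-projections of ${\mathcal T}_\alpha(A_2)$ and ${\mathcal T}_\beta(D_2)$ coincide. For the $y$-projections, the three compositions of $y\mapsto N/(j+y)$ with digit sequence $(d+i,d,k)$ applied to $[A,D]$, and with digit sequence $(d,d+i,k+1)$ applied to $[C,F]$, must produce the same interval. I would verify this by repeatedly substituting the defining relations~(\ref{eq:RelationsBetweenHeights1})--(\ref{eq:RelationsBetweenHeights2}), using the identities $E=A+1$, $F=B+1$, and ultimately invoking $N=d(d+i)/(i-1)$ to close the algebraic manipulation.

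Once~(\ref{eq:QuiltingWorks}) is in hand, the map ${\mathcal M}_{\beta,\alpha}$ is a measure-preserving bijection almost everywhere: on $D_j$ it is ${\mathcal T}_\alpha^{-(3-j)}\circ{\mathcal T}_\beta^{3-j}$, a composition of bijections whose intermediate image ${\mathcal T}_\beta(D_2)={\mathcal T}_\alpha(A_2)$ lies in the shared part of the two domains; on $\Omega_\beta\setminus(D_0\cup D_1\cup D_2)=\Omega_\alpha\setminus(A_0\cup A_1\cup A_2)$ it is the identity. Measure preservation follows from Theorem~\ref{thm:NaturalExtension}, as both ${\mathcal T}_\alpha$ and ${\mathcal T}_\beta$ preserve the common density $HN/(N+xy)^2$. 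The intertwining identity ${\mathcal T}_\alpha\circ{\mathcal M}_{\beta,\alpha}={\mathcal M}_{\beta,\alpha}\circ{\mathcal T}_\beta$, which is needed to deduce metric isomorphism of the \emph{dynamical} systems, drops out formally from the telescoping definition: for $(x,y)\in D_0$ one has ${\mathcal T}_\alpha({\mathcal M}_{\beta,\alpha}(x,y))={\mathcal T}_\alpha^{-2}({\mathcal T}_\beta^3(x,y))={\mathcal M}_{\beta,\alpha}({\mathcal T}_\beta(x,y))$ because ${\mathcal T}_\beta(x,y)\in D_1$, and similarly on $D_1, D_2$ and on the identity region. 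The main obstacle is precisely the verification of~(\ref{eq:QuiltingWorks}): it is an explicit algebraic identity among $A,B,C,D,E,F$, $d$, $i$, $k$, $\alpha$, $\beta$ that requires a case analysis over $k=d,d+1,\dots,d+i-1$ (as in Theorem~\ref{thm:PlanarDomainNaturalExtension}); each case reduces by the same mechanism, so I would treat $k=d$ carefully and remark that the other cases are analogous.
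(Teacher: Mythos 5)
Your proposal is correct and follows essentially the same route as the paper: compute $A_1,A_2,D_1,D_2$ as rectangles via the digit sequences $(d+i,d,k)$ and $(d,d+i,k+1)$ together with the height relations~(\ref{eq:RelationsBetweenHeights1})--(\ref{eq:RelationsBetweenHeights2}), obtain~(\ref{eq:QuiltingWorks}) from the $3$-step matching of Theorem~\ref{thm:MatchingIn3Steps} for the first coordinates and from $E=A+1$, $F=B+1$ for the second, and conclude via the common invariant density. If anything, your explicit telescoping check of the intertwining relation ${\mathcal T}_{\alpha}\circ{\mathcal M}_{\beta,\alpha}={\mathcal M}_{\beta,\alpha}\circ{\mathcal T}_{\beta}$ is a point the paper's proof leaves implicit.
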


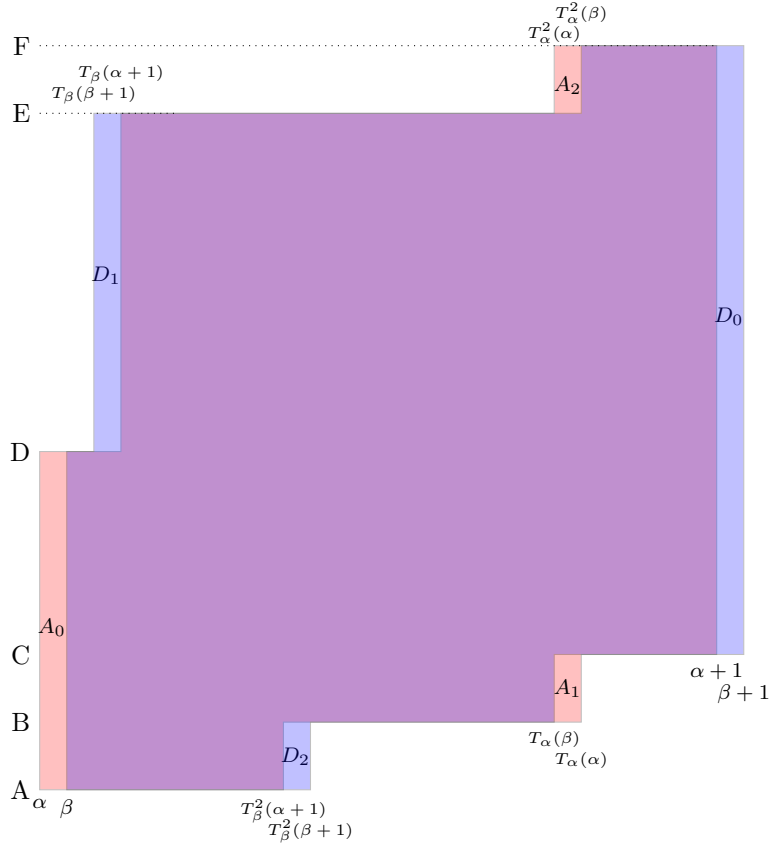
\begin{figure}[h!]
\begin{center}
\begin{tikzpicture}[scale=1.8]

\draw[fill=red,opacity=0.25] (0,0) -- (1.8,0)
--(1.8,0.5)-- (4,0.5) --(4,1)-- (5,1)--(5,5.5)--(3.8,5.5)--(3.8,5)--(0.6,5)--(0.6,2.5)--(0,2.5) -- cycle;
\filldraw[black] (0,0)   node[anchor=east]{ A};
\filldraw[black] (0,0.5)   node[anchor=east]{ B};
\filldraw[black] (0,1)   node[anchor=east]{ C};
\filldraw[black] (0,2.5)   node[anchor=east]{D };
\filldraw[black] (0,5)   node[anchor=east]{ E};
\filldraw[black] (0,5.5)   node[anchor=east]{ F};


\node at (0.09,1.2){\color{black}\footnotesize $A_0$};
\node at (3.9,0.75){\color{black}\footnotesize $A_1$};
\node at (3.9,5.21){\color{black}\footnotesize $A_2$};
\node at (1.89,0.25){\color{black}\footnotesize $D_2$};
\node at (0.49,3.8){\color{black}\footnotesize $D_1$};
\node at (5.09,3.5){\color{black}\footnotesize $D_0$};

\node[anchor=north] at (0,0){\color{black}\footnotesize $\alpha$};
\node[anchor=north] at (0.2,0){\color{black}\footnotesize $\beta$};

\node[anchor=north] at (5,1){\color{black}\footnotesize $\alpha+1$};
\node[anchor=north] at (5.2,0.85){\color{black}\footnotesize $\beta+1$};
\node[anchor=north] at (3.8,0.5){\color{black}\tiny $T_{\alpha}(\beta)$};
\node[anchor=north] at (4,0.35){\color{black}\tiny $T_{\alpha}(\alpha)$};
\node[anchor=south] at (4,5.6){\color{black}\tiny $T_{\alpha}^2(\beta)$};
\node[anchor=south] at (3.8,5.45){\color{black}\tiny $T_{\alpha}^2(\alpha)$};
\node[anchor=south] at (0.4,5){\color{black}\tiny $T_{\beta}(\beta+1)$};
\node[anchor=south] at (0.6,5.15){\color{black}\tiny $T_{\beta}(\alpha+1)$};
\node[anchor=north] at (2,-0.15){\color{black}\tiny $T_{\beta}^2(\beta+1)$};
\node[anchor=north] at (1.8,0){\color{black}\tiny $T_{\beta}^2(\alpha+1)$};

\draw[fill=blue,opacity=0.25] (0.2,0) -- (1.8+0.2,0)
--(1.8+0.2,0.5)-- (4-0.2,0.5) --(4-0.2,1)-- (5.2,1)--(5.2,5.5)--(3.8+0.2,5.5)--(3.8+0.2,5)--(0.4,5)--(0.4,2.5)--(0.2,2.5) -- cycle;

\draw[dotted, black] (0,5) -- (1,5);
\draw[dotted, black] (0,5.5) -- (5,5.5);
\end{tikzpicture}
\end{center}
\caption{$\Omega_{\alpha}$ and $\Omega_{\beta}$ for $\alpha, \beta\in X_{N,d,i,k}$, where $\alpha < \beta$.}\label{fig:OmegaAlphaAndBeta}
\end{figure}

\begin{proof}
Note that from Theorem~\ref{thm:PlanarDomainNaturalExtension} it follows that $A_1={\mathcal T}_{\alpha}(A_0) = [T_{\beta}(\beta),T_{\alpha}(\alpha)]\times [B,C]$, so due to the construction of the planar natural extension $\Omega_{\alpha}$ we \emph{must} have that $T_{\alpha}(\beta) = T_{\beta}(\beta)$; i.e.\ that $\beta\in\Delta_{d+i}(\alpha )$ (if $\beta\not\in\Delta_{d+i}(\alpha )$ then the ``$\alpha$-digit'' of $\beta$ is at most $d+i-1$, and we would have that $T_{\alpha}(\beta) \geq T_{\beta}(\beta) +1\not\in\Omega_{\alpha}$. In a similar way we find that
$$
A_2={\mathcal T}_{\alpha}^2(A_0) = [T_{\alpha}^2(\alpha), T_{\beta}^2(\beta)]\times [E,F],
$$
that
$$
D_1 = {\mathcal T}_{\beta}(D_0) = [T_{\beta}(\beta +1), T_{\alpha}(\alpha +1)]\times [D,E],\quad
D_2 = {\mathcal T}_{\beta}(D_0) = [T_{\alpha}^2(\alpha +1) , T_{\beta}^2(\beta +1)]\times [A,B].
$$
From this we see that
\begin{equation}\label{eq:T{alpha}(A4)}
{\mathcal T}_{\alpha}(A_2) = [T_{\beta}^3(\beta) , T_{\alpha}^3(\alpha)]\times \left[ \frac{N}{k+F}, \frac{N}{k+E}\right] ,
\end{equation}
and that
\begin{equation}\label{eq:T{beta}(D4)}
{\mathcal T}_{\beta}(D_2) = [T_{\beta}^3(\beta +1) , T_{\alpha}^3(\alpha+1)]\times \left[ \frac{N}{k+1+B}, \frac{N}{k+1+A}\right] .
\end{equation}
Since both $\alpha , \beta\in X_{N,d,i,k}$, it follows from Theorem~\ref{thm:MatchingIn3Steps} that $T_{\alpha}^3(\alpha) = T_{\alpha}^3(\alpha +1)$ and that $T_{\beta}^3(\beta) = T_{\beta}^3(\beta +1)$. So from~(\ref{eq:T{alpha}(A4)}), (\ref{eq:T{beta}(D4)}) and the fact that in~(\ref{eq:ValuesOfTheVariusHeights1}) we saw that $A+1=E$ and $B+1=F$, we immediately find that ${\mathcal T}_{\alpha}^3(A_0) = {\mathcal T}_{\beta}^3(D_0)$.

To see that ${\mathcal M}:\Omega_{\beta}\to\Omega_{\alpha}$ is a metric isomorphism, note that the sets $A_0$, $A_1$ and $A_2$ are disjoint a.s.\  from $\Omega_{\beta}$, and that the sets $D_0$, $D_1$ and $D_2$ are disjoint a.s.\ from $\Omega_{\alpha}$. Furthermore, all four maps ${\mathcal T}_{\alpha}$, ${\mathcal T}_{\alpha}^{-1}$, ${\mathcal T}_{\beta}$ and ${\mathcal T}_{\beta}^{-1}$ preserve any measure with density
$\frac{1}{H} \frac{N}{(N+xy)^2}$, where $H$ is the normalizing constant given in~(\ref{eq:NormalizingConstant}), and the maps  ${\mathcal T}_{\alpha}$, ${\mathcal T}_{\alpha}^{-1}$ are a.s.\ bijective on $\Omega_{\alpha}$, while the maps ${\mathcal T}_{\beta}$ and ${\mathcal T}_{\beta}^{-1}$ are a.s.\ bijective on $\Omega_{\beta}$.

Thus we see that for $\alpha,\beta\in X_{N,d,i,k}$, $\alpha < \beta$, for $k\in\{d,d+1,\dots,d+i\}$, the ergodic dynamical systems $(\Omega_{\alpha},{\mathcal B}_{\alpha},\bar{\mu}_{\alpha}, {\mathcal T}_{\alpha})$ and $(\Omega_{\beta},{\mathcal B}_{\beta},\bar{\mu}_{\beta}, {\mathcal T}_{\beta})$ are metrically isomorphic.
\end{proof}

\begin{Remarks}\label{rem:isomorphisms}{\rm
($i$) To see that for any $\alpha,\beta\in {\bar X}_{N,d,i}$ (say with $\alpha < \beta$) we have that their corresponding dynamical systems are metrically isomorphic it is enough to show that for $\alpha,\beta \in {\bar X}_{N,d,i,k}$ (for some $k\in\{ d,d+1,\dots,d+i\}$) with $\alpha < \beta$, where either $T_{\alpha}(\alpha )= \frac{N}{\alpha + 1+d}$, or $T_{\alpha}(\alpha +1)=\alpha$, or $T_{\alpha}^{2}(\alpha)$ is a boundary point of $I_k(\alpha)$, or $T^2_{\alpha}(\alpha+1)$ is a boundary point of $I_{k+1}(\alpha)$, and similarly for $T_{\beta}$, that the corresponding dynamical systems are isomorphic. In~Remarks~\ref{rem:RemarkAboutTheMatchingTheorem}($iv$) we mentioned as special cases when either $T_{\alpha}(\alpha )= \frac{N}{\alpha + 1+d}$, or $T_{\alpha}(\alpha +1)=\alpha$. We will show in these last two cases that it is very easy to construct a natural extension. All other cases mentioned here are similar to these two cases, and therefore omitted.

Let us first assume that $T_{\alpha}(\alpha )= \frac{N}{\alpha + 1+d}$. As we already remarked in~Remarks~\ref{rem:RemarkAboutTheMatchingTheorem}($iv$), $\frac{N}{\alpha + 1+d}$ is the dividing point between the cylinders $\Delta_d$ and $\Delta_{d+1}$. But then we have, that $T_{\alpha}(\alpha ) = \frac{N}{\alpha+d+1}$, and from~(\ref{eq:BasCF}) we immediately find that $\alpha = B$, and due to $F=B+1$ (cf.~(\ref{eq:ValuesOfTheVariusHeights1})) and $F=\frac{N}{d+B}$ (cf.~(\ref{eq:ValuesOfTheVariusHeights2})), from which we see that $B=\frac{N}{F}- d$ (i.e.~$T_B(B+1)=B$), we see that $T_B$ is full on the right-most cylinder $\Delta_B(d)$. Consequently, the domain of the planar natural extension for this particular value of $\alpha$ is the left-hand side polygon given in Figure~\ref{fig:AandB}.

Next, let us assume that $T_{\alpha}(\alpha+1)=\frac{N}{\alpha + d+i}$. So the map $T_{\alpha}$ sends $\alpha+1$ to the dividing point of the most left-hand cylinder $\Delta_{d+i}$ and $\Delta_{d+i-1}$. From $T_{\alpha}(\alpha+1)=\frac{N}{\alpha + d+i}$ it follows that $\alpha$ is the positive root of $d\alpha^2 + d(d+i+1)\alpha + d(d+i) + (1-(d+i)))N = 0$, which is
\begin{equation}\label{eq:OtherExpressionForA}
\alpha = \frac{-d(d+i+1) + \sqrt{d^2(d+i+1)^2-4d((d+i)+(1-(d+i))N)}}{2d}.
\end{equation}
Now a trivial but somewhat tedious calculation shows that the expression for $A$ from~(\ref{eq:ValuesOfTheVariusHeights1}) and for $\alpha$ from~(\ref{eq:OtherExpressionForA}) are the same whenever $N=\frac{(d+i)}{i-1}$; i.e.\ we find that $T_A(A+1)=\frac{N}{A+ d+i}$. In this case we have that $T_A(A)=A+1$ if and only if $\frac{N}{A}-(d+i)=A+1$, which is equivalent to $A$ satisfying $A^2+(d+i+1)A-N=0$. The positive solution to this equation is $A$ from~(\ref{eq:ValuesOfTheVariusHeights1}). We see that $T_A$ is full on the left-most cylinder $\Delta_A(d+i)$. Consequently, the domain of the planar natural extension for this particular value of $\alpha$ is the right-hand side polygon given in Figure~\ref{fig:AandB}.

($ii$) The map ${\mathcal M}_{\beta,\alpha}$ from~(\ref{eq:MetricIsomorphism}) is the so-called \emph{quilting map} from $\Omega_{\beta}$ to $\Omega_{\alpha}$. Note that ${\mathcal M}_{\beta,\alpha}$ maps $D_{\ell}$ bijectively (a.s.) to $A_{\ell}$ for $\ell = 0,1,2$. That we \emph{quilt} can be seen as follows: we map $D_0$ to $A_0$, so we add $A_0$ to $\Omega_{\beta}$ as a `patch'. We then remove $D_0$, $D_1$ and $D_2$ from $\Omega_{\beta}$, and add to $\Omega_{\beta}$ the sets $A_1$ and $A_2$, thus finding $\Omega_{\alpha}$. Note that we also should remove ${\mathcal T}_{\beta}(D_2)$ from $\Omega_{\beta}$, but the `gap' thus created is filled by ${\mathcal T}_{\alpha}(A_2)$, and the \emph{quilting process} stops.

($iii$) In~\cite{HKLM,dJKN}, the quilting is steered by operations on the partial quotients called insertions and singularizations, and in these cases the map ${\mathcal M}_{\beta,\alpha}$ can be more explicitly given. In fact one only needs to know ${\mathcal M}_{\beta,\alpha}:D_0\to A_0$, remove all forward images $D_{\ell}={\mathcal T}_{\beta}^{\ell}(D_0)$ from $\Omega_{\beta}$, and add all forward images ${\mathcal T}_{\alpha}^{\ell}(A_0)$ to  $\Omega_{\beta}$ (where $\ell = 0,1,2,\dots$), yielding $\Omega_{\alpha}$. One easily sees that the first coordinate map of ${\mathcal M}_{\beta,\alpha}:D_0\to A_0$ must be $x\mapsto x-1$, but the second coordinate map is usually more complicated.}\hfill $\triangle$
\end{Remarks}

From Proposition~\ref{prop:LemmaAboutAandD} and Remarks~\ref{rem:isomorphisms}($i$) we have the following result.

\begin{Theorem}\label{theorem:LemmaAboutAandD}
Let $N\geq 2$ be an integer, and let $d\geq 1$ and $i\geq 2$ be integers, such that $N=\frac{d(d+i)}{i-1}$. Let $\alpha, \beta\in [A,B]= {\overline X}_{N,d,i}$, $\alpha < \beta$ arbitrary. Then the dynamical systems $(\Omega_{\alpha}, \bar{\mathcal B}_{\alpha}, \bar{\mu}_{\alpha}, {\mathcal T}_{\alpha})$ and $(\Omega_{\beta}, \bar{\mathcal B}_{\beta}, \bar{\mu}_{\beta}, {\mathcal T}_{\beta})$ are metrically isomorphic.
\end{Theorem}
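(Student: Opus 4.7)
The plan is to deduce the theorem from Proposition~\ref{prop:LemmaAboutAandD} by combining (a) transitivity of metric isomorphism, (b) the decomposition of $\overline{X}_{N,d,i}$ into finitely many combinatorial pieces, and (c) the boundary analysis sketched in Remarks~\ref{rem:isomorphisms}($i$). By Remark~\ref{rem:RemarkAboutTheMatchingTheorem}($ii$) one has
$$
[A,B]\; =\; \overline{X}_{N,d,i}\; =\; \bigcup_{k=d}^{d+i-1}\overline{X}_{N,d,i,k},
$$
and on each open piece $X_{N,d,i,k}$ the combinatorics of the orbits of $\gamma$ and $\gamma+1$ under $T_{\gamma}$ is constant, so there are only finitely many exceptional values of $\gamma$ in $[A,B]$: the two extremes $A$, $B$, and the finitely many separating points between consecutive $\overline{X}_{N,d,i,k}$.

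First, I would dispose of the generic case in which both $\alpha$ and $\beta$ lie in the same open piece $X_{N,d,i,k}$ and satisfy the interior hypotheses $T_{\gamma}^{2}(\gamma)\in I_k^o(\gamma)$ and $T_{\gamma}^{2}(\gamma+1)\in I_{k+1}^o(\gamma)$: this is exactly Proposition~\ref{prop:LemmaAboutAandD}, which produces the explicit quilting isomorphism ${\mathcal M}_{\beta,\alpha}\colon\Omega_{\beta}\to\Omega_{\alpha}$ given by~(\ref{eq:MetricIsomorphism}).

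To reach arbitrary $\alpha<\beta$ in $[A,B]$ I would insert a finite increasing chain $\alpha=\gamma_0<\gamma_1<\dots<\gamma_m=\beta$ such that each consecutive pair $(\gamma_j,\gamma_{j+1})$ lies in a single $\overline{X}_{N,d,i,k_j}$. On each such pair the previous case gives a metric isomorphism ${\mathcal M}_{\gamma_{j+1},\gamma_j}$ provided neither endpoint is exceptional; whenever $\gamma_j$ or $\gamma_{j+1}$ is one of the finitely many special points (in particular $A$, $B$, or a separation point where $T_{\gamma}^{2}(\gamma)$, respectively $T_{\gamma}^{2}(\gamma+1)$, lands on a cylinder boundary, or where $T_{\gamma}(\gamma)=N/(\gamma+d+1)$ or $T_{\gamma}(\gamma+1)=\gamma$) I would invoke the construction of Remarks~\ref{rem:isomorphisms}($i$). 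There the planar natural extension is built explicitly (cf.\ the two limiting shapes in Figure~\ref{fig:AandB}) and the quilting argument of Proposition~\ref{prop:LemmaAboutAandD} carries over verbatim, because the key identity ${\mathcal T}_{\alpha}^{3}(A_0)={\mathcal T}_{\beta}^{3}(D_0)$ used in~(\ref{eq:QuiltingWorks}) only depends on three-step matching, which by Theorem~\ref{thm:MatchingIn3Steps} persists on the closure. Composing the isomorphisms ${\mathcal M}_{\gamma_{j+1},\gamma_j}$ along the chain and using transitivity of metric isomorphism yields the desired ${\mathcal M}_{\beta,\alpha}\colon\Omega_{\beta}\to\Omega_{\alpha}$.

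The main obstacle is verifying that the quilting construction is genuinely robust at the finitely many exceptional values of $\gamma$: one has to check that as $\gamma$ crosses from $\overline{X}_{N,d,i,k}$ into $\overline{X}_{N,d,i,k'}$, or approaches $A$ or $B$, the patches $D_0,D_1,D_2\subset\Omega_{\beta}$ and $A_0,A_1,A_2\subset\Omega_{\alpha}$ of~(\ref{def:definitionsOfA1AndD1}) degenerate in a consistent way, some possibly shrinking to sets of measure zero, without destroying the bijective matching of ${\mathcal T}_{\alpha}^{3}(A_0)$ and ${\mathcal T}_{\beta}^{3}(D_0)$. These are finite case-by-case algebraic checks of precisely the same type as those carried out in the proof of Proposition~\ref{prop:LemmaAboutAandD} and in Remarks~\ref{rem:isomorphisms}($i$); because the heights $A,B,C,D,E,F$ from~(\ref{eq:ValuesOfTheVariusHeights1})--(\ref{eq:ValuesOfTheVariusHeights2}) are independent of $\gamma$, the continuity of $\gamma\mapsto\Omega_{\gamma}$ on each closed piece $\overline{X}_{N,d,i,k}$ makes the limiting identifications automatic, and no new ideas beyond those already present in Proposition~\ref{prop:LemmaAboutAandD} and Remarks~\ref{rem:isomorphisms}($i$) are required.
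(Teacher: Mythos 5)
Your proposal is correct and follows essentially the same route as the paper, which derives the theorem directly from Proposition~\ref{prop:LemmaAboutAandD} (the generic case within one $X_{N,d,i,k}$) together with the boundary analysis sketched in Remarks~\ref{rem:isomorphisms}($i$), the chaining over the pieces $\overline{X}_{N,d,i,k}$ and the transitivity of metric isomorphism being left implicit there. Your write-up merely makes that finite chain $\alpha=\gamma_0<\dots<\gamma_m=\beta$ and the treatment of the finitely many exceptional parameters explicit, which is a faithful elaboration rather than a different argument.
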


\begin{figure}[h!]
\begin{center}
\begin{tikzpicture}
\draw[black,  thick] (0,0) -- (3.3,0) --(3.3,1)-- (5,1)-- (5,1)--(5,5)--(0,5)--(0,0) -- cycle;

\filldraw[black] (0,0)   node[anchor=east]{ A};
\filldraw[black] (0,0.3)   node[anchor=east]{ B};
\filldraw[black] (0,1)   node[anchor=east]{ C};
\filldraw[black] (3.3,0)   circle (1pt)  node[anchor=north]{\tiny $T(\alpha)=\frac{N}{\alpha+d +1}$ };
\filldraw[black] (0,3.5)   node[anchor=east]{ D};
\filldraw[black] (0,5)   node[anchor=east]{ E};
\filldraw[black] (0,5.3)  circle (1pt)   node[anchor=east] { F};
\draw[dotted, black] (3.3,1)-- (3.3,5);

\draw[dotted, black] (0.7+7,3.5)-- (0.7+7,0.3);

\filldraw[black] (4.5, 2.5)   node[anchor=east]{ $\bigtriangleup_{d}$};
\filldraw[black] (3,2.5)   node[anchor=east]{ $\bigtriangleup_{d+i},\cdots,\bigtriangleup_{d+1}$};

\filldraw[black] (0.79+7, 2.5)   node[anchor=east]{\tiny$\bigtriangleup_{d+i}$};
\filldraw[black] (0.79+10.3, 2.5) node[anchor=east]{$\bigtriangleup_{d+i-1},\cdots,\bigtriangleup_{d}$};

\draw[black,  thick] (0+7,0.3) -- (5+7,0.3) --(5+7,5.3)--(0.7+7,5.3)--(0.7+7,3.5)--(0+7,3.5)-- cycle;

\filldraw[black] (0+7,0) circle (1pt)  node[anchor=east]{ A};
\filldraw[black] (0+7,0.3)   node[anchor=east]{ B};
\filldraw[black] (0+7,1)   node[,anchor=east]{ C};
\filldraw[black] (0.7+7,5.3)   circle (1pt)  node[anchor=south]{\tiny $T(\alpha+1)=\frac{N}{\alpha+d +i} $ };
\filldraw[black] (0+7,3.5)   node[anchor=east]{ D};
\filldraw[black] (0+7,5)   node[anchor=east]{ E};
\filldraw[black] (0+7,5.3)   node[anchor=east]{ F};

\end{tikzpicture}
\end{center}
\caption{$\Omega_{B}$ (left) and $\Omega_{A}$ (right).}\label{fig:AandB}
\end{figure}
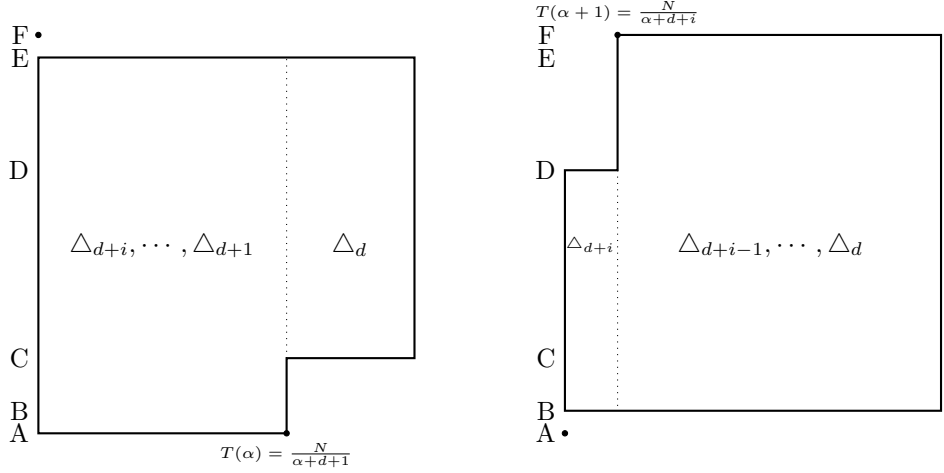

\section{Plateaux with the same entropy for every $N\in\N$, $N\geq 2$}\label{sec:entropy}
Let $N\geq 2$ be an integer, and let $d\geq 1$ and $i\geq 2$ be integers, such that $N=\frac{d(d+i)}{i-1}$. Let $\alpha\in X_{N,d,i}$, then a direct corollary of Theorem~\ref{theorem:LemmaAboutAandD} is that not only the normalizing constat $H$ is the same for all $\alpha$, but also that that the entropy is $h(T_{\alpha})$ is constant for all $\alpha\in X_{N,d,i}$. This is exactly the statement of Theorem~\ref{thm:EqualEntropyOnX(N,d,i)}. In the statement (and proof) of Theorem~\ref{thm:EqualEntropyOnX(N,d,i)} the dilogarithm function ${\rm Li}_2$ appears at various places, and therefore we first recall some facts about the dilogarithm, which for $z\in\C$ can be defined by the sum
\begin{equation}\label{def:dilogarithm}
{\rm Li}_2(z) = \sum_{k=1}^{\infty} \frac{z^k}{k^2},\,\, \text{for $|z| \leq 1$},\qquad \text{or by the integral}\quad {\rm Li}_{2}(z)=\int^{0}_{z}\frac{\log(1-t)}{t}\, {\rm d}t;
\end{equation}
see also~\cite{Le1,Le2} for more information on the dilogarithm function (and polylogarithm functions in general).

\begin{Lemma}\label{lem:dilogarithm}   For any $m>n>0$, and $M,N>0$,
\begin{eqnarray*}\label{eq:dilogaritm1}
\int_{n}^{m} (\log x)\frac{M}{N+Mx}\, {\rm d}x = \Big( {\rm Li}_{2}(-\tfrac{M}{N}x) + (\log x)\log(1+\tfrac{M}{N}x)\Big){\Big |}_{n}^{m},
\end{eqnarray*}
where ${\rm Li}_{2}(.)$ is the dilogarithm function from~(\ref{def:dilogarithm}).
\end{Lemma}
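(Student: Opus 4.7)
The identity is an antiderivative computation, so the natural plan is integration by parts followed by a substitution that matches the dilogarithm's integral representation. Concretely, I would take $u=\log x$ and $dv=\tfrac{M}{N+Mx}\,dx$ on the left-hand side, so that $du=\tfrac{1}{x}\,dx$ and $v=\log\!\bigl(1+\tfrac{M}{N}x\bigr)$ (which differs from $\log(N+Mx)$ by the constant $\log N$, a convenient normalization since it kills the integration constant when $x\to 0$). Integration by parts then gives
\begin{equation*}
\int_{n}^{m}(\log x)\,\frac{M}{N+Mx}\,dx
=\Bigl[(\log x)\log\!\bigl(1+\tfrac{M}{N}x\bigr)\Bigr]_{n}^{m}
-\int_{n}^{m}\frac{\log\!\bigl(1+\tfrac{M}{N}x\bigr)}{x}\,dx.
\end{equation*}

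The remaining integral is the one that produces the dilogarithm. I would substitute $t=-\tfrac{M}{N}x$, noting that $\tfrac{dx}{x}=\tfrac{dt}{t}$ (the factor $-N/M$ cancels), which transforms the integrand into $\tfrac{\log(1-t)}{t}\,dt$. By the integral definition of the dilogarithm recalled in~(\ref{def:dilogarithm}),
\begin{equation*}
\int\frac{\log(1-t)}{t}\,dt=-\mathrm{Li}_{2}(t)+C,
\end{equation*}
so resubstituting $t=-\tfrac{M}{N}x$ and evaluating between the limits yields
\begin{equation*}
\int_{n}^{m}\frac{\log\!\bigl(1+\tfrac{M}{N}x\bigr)}{x}\,dx
=\Bigl[-\mathrm{Li}_{2}\!\bigl(-\tfrac{M}{N}x\bigr)\Bigr]_{n}^{m}.
\end{equation*}
Combining this with the integration-by-parts formula above gives exactly the claimed expression.

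There is essentially no obstacle here: the hypothesis $m>n>0$ ensures $\log x$ is well defined on $[n,m]$, and the substitution is valid on the same interval. The only point to double-check is the sign convention for $\mathrm{Li}_{2}$, since the paper writes $\mathrm{Li}_{2}(z)=\int_{z}^{0}\tfrac{\log(1-t)}{t}\,dt$; this is consistent with $\tfrac{d}{dz}\mathrm{Li}_{2}(z)=-\tfrac{\log(1-z)}{z}$, so the antiderivative of $\tfrac{\log(1-t)}{t}$ is indeed $-\mathrm{Li}_{2}(t)$ and the two minus signs combine correctly to give the $+\mathrm{Li}_{2}(-\tfrac{M}{N}x)$ term in the statement. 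One could alternatively verify the identity by differentiating the right-hand side directly and simplifying, which provides a one-line sanity check of the result.
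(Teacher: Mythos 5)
Your proposal is correct and follows essentially the same route as the paper: integration by parts with $u=\log x$, followed by the substitution $t=-\tfrac{M}{N}x$ to identify the remaining integral with the dilogarithm; your choice of $v=\log(1+\tfrac{M}{N}x)$ simply absorbs at the outset the constant $\log\tfrac{N}{M}$ that the paper splits off in a separate step. The sign bookkeeping for $\mathrm{Li}_2$ is handled correctly.
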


\begin{proof} Note that integration by parts yields that,
\begin{eqnarray*}\label{eq:dilogaritm2}
\int (\log x)\frac{M}{N+Mx}\, {\rm d}x  &=& (\log x)\log(\tfrac{N}{M}+x) - \int\frac{\log(\frac{N}{M}+x)}{x}\, {\rm d}x \\
&=& (\log x)\log(\tfrac{N}{M}+x) - \bigg( \int\frac{\log(1+\frac{M}{N}x)}{x}\, {\rm d}x + \int\frac{\log\frac{N}{M}}{x}\, {\rm d}x\bigg) \\
&=& -\int\frac{\log(1+\frac{M}{N}x)}{x}\, {\rm d}x + (\log x)\log(1+\tfrac{M}{N}x).
\end{eqnarray*}
Setting $\frac{M}{N}x=-t$ one easily sees that, for $m>0$,
\begin{eqnarray*}\label{eq:dilogaritm3}
-\int^{m}_{0}\frac{\log(1+\frac{M}{N}x)}{x}\, {\rm d}x = \int^{0}_{-\tfrac{M}{N}m}\frac{\log(1-t)}{t}\, {\rm d}t = {\rm Li}_{2}(-\tfrac{M}{N}m).
\end{eqnarray*}
Therefore,
\begin{eqnarray*}\label{eq:dilogaritm4}
\int_{n}^{m} (\log x)\frac{M}{N+Mx}\, {\rm d}x = \Big( {\rm Li}_{2}(-\tfrac{M}{N}x) + (\log x)\log(1+\tfrac{M}{N}x)\Big){\Big |}_{n}^{m}.
\end{eqnarray*}
\end{proof}

We have the following result.

\begin{Theorem}\label{thm:EqualEntropyOnX(N,d,i)}
Let $N\geq 2$ be an integer, and let $d, i\in\N$, $i\geq 2$, be such, that $N=\frac{d(d+i)}{i-1}$. Then for any $\alpha\in [A,B] = {\overline X}_{N,d,i}$, one has that the entropy function $h(T_{\alpha})$ is constant on $[A,B] = {\overline X}_{N,d,i}$, and is given by:
\begin{eqnarray*}
h(T_{\alpha}) &=& \log N - 2H\bigg(\Big( {\rm Li}_{2}(-\tfrac{Ex}{N}) + (\log x )\log(\tfrac{Ex}{N}+1)\Big){\Big |}_{B}^{B+1} \\
&& - \Big( {\rm Li}_{2}(-\tfrac{Ax}{N}) + (\log x )\log(\tfrac{Ax}{N}+1)\Big){\Big |}_{B}^{D} - \Big( {\rm Li}_{2}(-\tfrac{Cx}{N}) + (\log x )\log(\tfrac{Cx}{N}+1))\Big)\Big{|}_{D}^{B+1}\bigg),
\end{eqnarray*}
where $H^{-1}= 2\log A+\log (A+1)+\log (B+1)-\log\big( N-(A+1)d\big) - \log\big( N-(d+i)B\big)$ is the normalising constant for the $T_{\alpha}$-invariant measure $\mu_{\alpha}$ for $\alpha \in X_{N,d,i}$.
\end{Theorem}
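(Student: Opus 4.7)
My plan is to deduce constancy of $h(T_\alpha)$ on $[A,B]$ from the isomorphism theorem already proved, and then compute the common value by Rohlin's entropy formula applied at a convenient endpoint.

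For the constancy step, Theorem~\ref{theorem:LemmaAboutAandD} asserts that the natural extensions $(\Omega_\alpha, \bar{\mathcal B}_\alpha, \bar\mu_\alpha, {\mathcal T}_\alpha)$ are metrically isomorphic for all $\alpha \in [A,B]$. Since metric isomorphism preserves Kolmogorov--Sinai entropy and a measure-preserving system has the same entropy as its natural extension, $h(T_\alpha)$ is constant on $[A,B]$, so it suffices to evaluate $h(T_\alpha)$ at a single convenient $\alpha$. For the computation I would apply Rohlin's formula: $T_\alpha'(x) = -N/x^2$ on each cylinder, so $\log|T_\alpha'(x)| = \log N - 2\log x$, and
\[
h(T_\alpha) = \int_{I_\alpha}\log|T_\alpha'|\,d\mu_\alpha \;=\; \log N - 2\int_\alpha^{\alpha+1}(\log x)\,f_\alpha(x)\,dx.
\]

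I would then evaluate at the right endpoint $\alpha = B$, because there the six intervals in the density formula of Theorem~\ref{thm:NaturalExtension} collapse to three. From (\ref{eq:B1})--(\ref{eq:B2}) together with (\ref{eq:RelationsBetweenHeights2}) we have $T_B(B+1)=B$ and $T_B(B)=N/(B+d+1)=D$; iterating once more, using the boundary digit convention from Remarks~\ref{rem:RemarkAboutTheMatchingTheorem}($iv$) that $D \in \Delta_{d+1}$, gives $T_B^2(B+1)=D$ and $T_B^2(B)=B$. Substituting these four values into the six-term formula for $f_\alpha$ makes four of the indicator functions identically zero, and the remaining three terms are supported on $(B,B+1)$, $(B,D)$, and $(D,B+1)$. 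Applying Lemma~\ref{lem:dilogarithm} to each of the three resulting integrals of the form $\int_p^q(\log x)\frac{M}{N+Mx}\,dx$ then produces the stated dilogarithm-plus-log-log closed form, after using $F = B+1$ and $E = A+1$ from (\ref{eq:ValuesOfTheVariusHeights1}) to rewrite the coefficients as in the statement.

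The main technical obstacle is that strictly $B \notin X_{N,d,i}$: at $\alpha = B$ the three-step matching of Theorem~\ref{thm:MatchingIn3Steps} degenerates into a period-$2$ orbit $B \leftrightarrow D$ of $T_B$, so the hypotheses of Theorem~\ref{thm:NaturalExtension} do not apply directly at $\alpha = B$. I would handle this by either (a) invoking the boundary construction of the planar natural extension sketched in Remarks~\ref{rem:isomorphisms}($i$), which explicitly describes $\Omega_B$ in this degenerate case, or (b) approximating: choose $\alpha_n \in X_{N,d,i}$ with $\alpha_n \uparrow B$, verify $f_{\alpha_n}\to f_B$ a.e.\ together with a uniform integrability bound, and then conclude $h(T_B) = \lim_n h(T_{\alpha_n})$ via the constancy established in the first step. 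Once this boundary reduction is justified, the remainder is the direct three-line calculation using Lemma~\ref{lem:dilogarithm} outlined above.
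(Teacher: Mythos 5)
Your overall strategy is exactly the paper's: constancy of $h(T_\alpha)$ follows from the metric isomorphism of the natural extensions (Theorem~\ref{theorem:LemmaAboutAandD}) together with the facts that entropy is an isomorphism invariant and agrees with that of the natural extension, and the common value is computed by Rohlin's formula at $\alpha=B$ using Lemma~\ref{lem:dilogarithm}. However, there is a concrete error in your evaluation of the density at $\alpha=B$. You substitute $T_B(B+1)=B$, $T_B(B)=D$, $T_B^2(B+1)=D$ and $T_B^2(B)=B$ (the last via the convention that $D$ carries digit $d+1$) into the six-term formula for $f_\alpha$. With $T_B^2(B)=B$ the interval $(T^2(\alpha),\alpha+1)$ becomes all of $(B,B+1)$, so the surviving positive term is the one with coefficient $F$, not $E$; the $E$-term, supported on $(T(\alpha+1),T^2(\alpha))=(B,B)$, vanishes (so only three, not four, indicators die). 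Since $E=A+1\neq B+1=F$, this density differs genuinely from the one in the statement, does not integrate to $1/H$, and the resulting entropy would be off by $2H\int_B^{B+1}(\log x)\bigl(\tfrac{F}{N+Fx}-\tfrac{E}{N+Ex}\bigr)\,\mathrm{d}x\neq 0$. No rewriting ``using $F=B+1$ and $E=A+1$'' can turn an $F$-term into an $E$-term.

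The source of the error is that the boundary digit convention is the wrong tool here. For $\alpha\in X_{N,d,i}$ just below $B$ one has $T_\alpha(\alpha)>N/(\alpha+d+1)$, i.e.\ $T_\alpha(\alpha)\in\Delta_d$, so $T_\alpha^2(\alpha)=N/T_\alpha(\alpha)-d\to N/D-d=B+1$, not $B$. (This is forced by matching: $N/T_\alpha^2(\alpha)=N/T_\alpha^2(\alpha+1)-1\to N/D-1=B+d=N/(B+1)$; note also that $T_\alpha^2(\alpha)\to B$ would violate $T_\alpha^2(\alpha)>T_\alpha^2(\alpha+1)\to D>B$.) Hence in the limit the $F$-interval $(T^2(\alpha),\alpha+1)$ degenerates while the $E$-interval fills $(B,B+1)$, giving $f_B=H\bigl(\tfrac{E}{N+Ex}\mathbf 1_{(B,B+1)}-\tfrac{A}{N+Ax}\mathbf 1_{(B,D)}-\tfrac{C}{N+Cx}\mathbf 1_{(D,B+1)}\bigr)$, consistent with the degenerate domain $\Omega_B=[B,D]\times[A,E]\cup[D,B+1]\times[C,E]$ of Figure~\ref{fig:AandB}, whose top is at height $E$, not $F$. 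Both of your proposed repairs --- the explicit $\Omega_B$ of Remarks~\ref{rem:isomorphisms}($i$) and the limit $\alpha_n\uparrow B$ --- yield this correct density, but they contradict the substitution you actually describe, and you do not notice the discrepancy. Once you use the limit/domain version of $f_B$, the remaining computation via Lemma~\ref{lem:dilogarithm} goes through as you outline and reproduces the paper's proof.
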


\begin{proof} From Theorem~\ref{theorem:LemmaAboutAandD} we know for any $\alpha , \beta\in [A,B] = {\overline X}_{N,d,i}$ we have that the dynamical systems $(\Omega_{\alpha},{\mathcal B}_{\alpha},\bar{\mu}_{\alpha}, {\mathcal T}_{\alpha})$ and $(\Omega_{\beta},{\mathcal B}_{\beta},\bar{\mu}_{\beta}, {\mathcal T}_{\beta})$ are metrically isomorphic, and due to Theorem~9.22 from~\cite{DKa} all $\alpha , \beta\in [A,B]$ have the same entropy.

So to know the entropy $h(T_{\alpha})$ it suffices to calculate it for just one $\alpha\in [A,B]$; we choose $\alpha = B$, as in this case (or in the case $\alpha = A$) the shape of $\Omega_{\alpha}$ is the easiest (cf.~Figures~\ref{Fig:NatExtN=2}, \ref{fig:OurNaturalExtensions1}, \ref{fig:OurNaturalExtensions2}, and~\ref{fig:AandB}). In this case we also have that $T_B(B)=\frac{N}{B+d+1}=D$; cf.~(\ref{eq:RelationsBetweenHeights2}).

By using Rohlin's formula (see~\cite{DKa}), and with $T_B$-invariant density $f_B$ from Theorem~\ref{thm:NaturalExtension}, we have for $\alpha \in [A,B]$, and in particular for $\alpha = B$ that:
\begin{eqnarray*}
h(T_{\alpha}) &=& \int_{B}^{B+1} \log|T_B'(x)|\, {\rm d}\mu_B(x) \,\, =\,\, \int_{B}^{B+1} \log|T_B'(x)| f_B(x)\, {\rm d}x \\
&=& \int_{B}^{B+1} (\log N-2\log x)f_B(x)\, {\rm d}x\,\, =\,\, \log N-2\cdot\int_{B}^{B+1} (\log x)\, f_B(x)\, {\rm d}x \\
&=& \log N-2H\cdot\int_{B}^{B+1} (\log x)\, \Big((\frac{E}{N+Ex}-\frac{A}{N+Ax})\mathbf{1}_{(B,D)}(x)\, {\rm d}x \\
&& + (\frac{E}{N+Ex}-\frac{C}{N+Cx})\mathbf{1}_{(D,B+1)(x)}\, {\rm d}x\Big) \\
&=& \log N-2H\bigg(\Big( {\rm Li}_{2}(-\tfrac{Ex}{N}) + (\log x )\log(\tfrac{Ex}{N}+1)\Big){\Big |}_{B}^{B+1} \\
&& -\Big( {\rm Li}_{2}(-\tfrac{Ax}{N}) + (\log x )\log(\tfrac{Ax}{N}+1)\Big){\Big |}_{B}^{D}  - \Big( {\rm Li}_{2}(-\tfrac{Cx}{N}) + (\log x )\log(\tfrac{Cx}{N}+1))\Big) {\Big |}_{D}^{B+1}\bigg),
\end{eqnarray*}
where $H^{-1}= 2\log A+\log(A+1)+\log(B+1)-\log\big(N-(A+1)d\big)-\log\big(N-(d+i)B\big)$ is the normalising constant; cf.~(\ref{eq:NormalizingConstant}) in Theorem~\ref{thm:NaturalExtension}.
\end{proof}

\begin{Example}\label{ex:entropy}{\rm In case $N=2$ our method yields only one plateau with equal entropy which follows from our method. This is the interval $[A,B]=[\frac{\sqrt{33}-5}{2},\sqrt{2}-1] = [0.3722813\cdots , 0.4142136\cdots ]$, which was already found in~\cite{KL}, where it was also determined that for $\alpha\in [A,B]$ we have that $h(T_{\alpha})= 1.137779584292255\cdots$ and $H=3.965116120651161\cdots$.

In case $N=8$ it follows from our method that there are five plateaux of equal entropy; see Table~\ref{table:ExampleEntropy}.
\begin{table}[h!]
\begin{center}
\begin{tabular}{l| c c c c} 

$(d,i)$ & Plateau intervals & Approximation of interval & $H_{\alpha}$ & $h(T_{\alpha})$  \\
\hline
       &                                                                & & \\
(2, 2) & $\left[ \frac{\sqrt{57}-5}{2}, \frac{\sqrt{33}-3}{2}\right]$   & [1.2749,1.3723] & 18.377877038370 & 0.9212748062044  \\
       &                                                                & \\
(4, 6) & $\left[ \frac{3\sqrt{17}-11}{2}, \frac{\sqrt{41}-5}{2}\right]$ & [0.6847,0.7016] & 11.239480662654 & 1.8212263472923 \\
       &                                                                & \\
(5, 11)& $\left[ \frac{3\sqrt{97}-29}{2}, \frac{\sqrt{57}-7}{2}\right]$ & [0.2733,0.2749] & 9.9626774452815 & 2.7933207303296\\
       &                                                                & \\
(6, 22)& $\left[ \frac{\sqrt{321}-17}{2}, \frac{2\sqrt{3}-3}{2}\right]$ & [0.4582,0.4641] & 9.2212359716540 & 2.2547418855378\\
       &                                                                & \\
(7, 57)& $\left[ \frac{3\sqrt{473}-65}{2}, \frac{\sqrt{17}-4}{2}\right]$& [0.1228,0.1231] & 8.7715446381451 & 3.3495778601659\\
       &                                                                & \\
\hline
\end{tabular}
\end{center}
\caption{The pairs of integers $d\geq 1,i\geq 2$, the related plateau intervals $[A,B]$ and constant entropy $h(T_{\alpha})$ for $\alpha\in [A,B]$. Here $N=8$.}\label{table:ExampleEntropy}
\end{table}
In Figure~\ref{fig:SimulationOfEntropyForN=8} a simulation of the entropy for $N=8$ is given as function of $\alpha\in (0,2\sqrt{2}-1]$. The largest of these plateaux of equal entropy from Table~\ref{table:ExampleEntropy} is clearly visible, but the simulation seems to suggest there are other plateaux as well. In general it seems that the entropy is decreasing when $\alpha$ is increasing, but in Figure~\ref{fig:SimulationOfEntropyForN=8}  there is clearly also an interval where the entropy increases. Our method yields one plateau in case $N=2$, but Figures~\ref{FigEntropySimulation} and~\ref{fig:SimulationOfEntropyForN=20} seem to indicate that there are intervals where the entropy is increasing or decreasing. In case $N=20$ our method yields 11 plateaux, the most right-hand one being $[1.844288770,1.898979486]$. However, the simulation in Figure~\ref{fig:SimulationOfEntropyForN=20} seem to indicate there are other plateaux.

\begin{figure}[ht]
\begin{center}
\includegraphics[width=10cm]{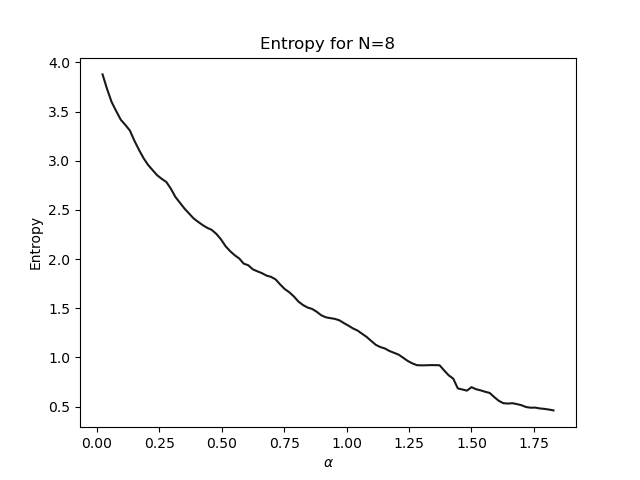}\\
\end{center}
\begin{center}
\caption{A simulation of the entropy of $T_{\alpha}$ when $N=8$.}\label{fig:SimulationOfEntropyForN=8}
\end{center}
\end{figure}

Clearly what we know about $N$-expansions with a finite set of digits is still in its infancy, certainly when compared to the vast body of knowledge about Nakada's $\alpha$-expansions. For these Nakada $\alpha$-expansions Laura Luzzi and Stefano Marmi first showed in~\cite{LM} using simulations that for $\alpha\in [0,g^2]$ there are intervals where the entropy either increases, is constant, or decreases. In~\cite{NN}, Hitoshi Nakada and Rie Natsui showed that there exist decreasing sequences of intervals of $\alpha$, denoted by $(I_n)$, $(J_n)$, $(K_n)$ and $(L_n)$, such that the entropy of $T_{\alpha}$ is increasing on $I_n$, constant on $J_n$ and $L_n$, and decreasing on $K_n$. Furthermore, for $n\in\N$ we have $\frac{1}{n}\in I_n$, and the various intervals are ordered\footnote{Here $I<J$ means that $I\cap J = \emptyset$, and $i<j$ for all $i\in I$ and all $j\in J$.} by $I_{n+1} < J_n < L_n < I_n$. See also the paper by Carlo Carminati and Giulio Tiozzo~\cite{CT}, and in particular Tiozzo's PhD-thesis~\cite{T}.
}\hfill $\triangle$
\end{Example}

\begin{figure}[ht]
\begin{center}
\includegraphics[width=10cm]{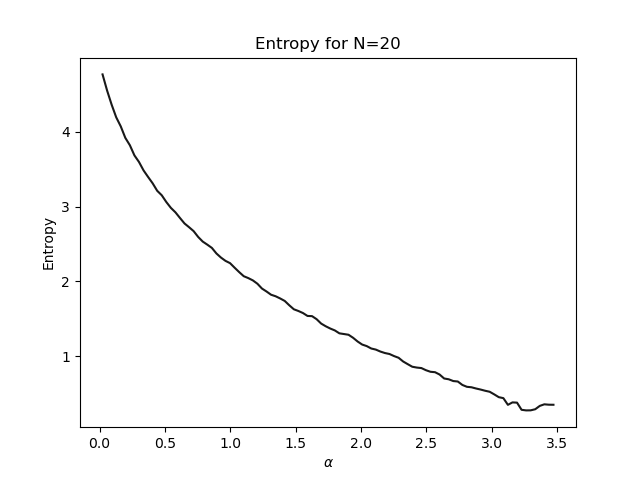}\\
\end{center}
\begin{center}
\caption{A simulation of the entropy of $T_{\alpha}$ when $N=20$.}\label{fig:SimulationOfEntropyForN=20}
\end{center}
\end{figure}

\section*{Acknowledgements}
The research of the first author is supported by the \emph{National Natural Science Foundation of China} (Nos.\ 12071148) and by the \emph{Science and Technology Commission of Shanghai Municipality} (No.\ 22DZ2229014).

\end{document}